\documentclass[11pt]{amsart}

\usepackage{amsmath,amssymb,amsthm}
\usepackage{mathtools}
\usepackage{graphicx}
\usepackage{enumitem}
\usepackage{tikz}
\usepackage{hyperref}
\usepackage{comment}

\theoremstyle{plain}
\newtheorem{thm}{Theorem}[section]
\newtheorem{lem}[thm]{Lemma}
\newtheorem{prop}[thm]{Proposition}
\newtheorem{cor}[thm]{Corollary}
\newtheorem{conj}[thm]{Conjecture}

\theoremstyle{definition}
\newtheorem{defi}[thm]{Definition}

\theoremstyle{remark}
\newtheorem{rem}[thm]{Remark}

\newcommand{\Fb}{\mathbb{F}}
\newcommand{\Rb}{\mathbb{R}}
\newcommand{\Zb}{\mathbb{Z}}
\newcommand{\Ac}{\mathcal{A}}
\newcommand{\Hc}{\mathcal{H}}
\newcommand{\Pc}{\mathcal{P}}
\newcommand{\Wc}{\mathcal{W}}
\DeclareMathOperator{\Hom}{Hom}
\DeclareMathOperator{\Ext}{Ext}
\DeclareMathOperator{\Fix}{Fix}
\DeclareMathOperator{\rk}{rank}
\DeclareMathOperator{\id}{id}
\DeclareMathOperator{\ima}{im}
\newcommand{\Gal}{\operatorname{Gal}}
\newcommand{\grh}{\mathcal{G}}

\title{Discrete homotopy of graphs and $k$-parabolic subspace arrangements}

\author{So Yamagata}
\address{Department of Applied Mathematics, Fukuoka University}
\email{so.yamagata@fukuoka-u.ac.jp}

\subjclass[2020]{55Q05, 05C25, 52C35}
\keywords{discrete homotopy theory, $k$-parabolic subspace arrangement, Coxeter complex, discrete homology, $k$-equal subspace arrangement}
\date{}

\begin{document}

\begin{abstract}
  Discrete homotopy theory is a combinatorial homotopy theory defined for discrete objects.
  Barcelo--Severs--White conjectured that the homotopy groups of complements of $k$-parabolic subspace arrangements are isomorphic to the discrete homotopy groups of graphs associated with the Coxeter complexes.
  We show that this correspondence fails in general by giving a counterexample in degree two for the $4$-parabolic subspace arrangement of type $A_5$.
\end{abstract}

\maketitle

\section{Introduction}
Discrete homotopy theory, also known as $A$-homotopy theory, is a combinatorial homotopy theory for graphs and related discrete objects, including simplicial complexes, metric spaces, and posets.
Its origins can be traced back to the work of Atkin~\cite{Atk74,Atk76}, who introduced a combinatorial framework for studying connectivity in simplicial complexes, motivated in part by applications to the analysis of networks.
Building on Atkin's ideas, Barcelo, Kramer, Laubenbacher, and Weaver~\cite{BKLW01} developed a systematic connectivity theory for simplicial complexes and introduced the corresponding $A$-homotopy groups.
A central role in this theory is played by connectivity graphs associated with simplicial complexes, which encode incidence relations among their faces and allow connectivity properties of simplicial complexes to be studied through discrete homotopy invariants of graphs.
Further developments and applications of this viewpoint were discussed by Barcelo and Laubenbacher~\cite{BL05}.

A cubical approach to discrete homotopy theory was introduced in~\cite{BBdLL06}, where a cubical set is associated with a graph and its homotopy groups are compared with the $A$-homotopy groups of the graph.
This relationship has since been placed on a homotopy-theoretic foundation by Carranza and Kapulkin~\cite{CK23,CK24}.
In particular, the cubical nerve associated with a graph provides a topological model for its discrete homotopy groups, and this framework also gives a discrete Hurewicz theorem relating the first non-vanishing $A$-homotopy group to discrete cubical homology.
Related developments include a systematic study of the discrete fundamental groupoid and covering graphs~\cite{KM25}.
More recently, the comparison between discrete and classical homotopy theory has been extended to homotopy $n$-types~\cite{CK26}.

In parallel with these homotopy-theoretic developments, discrete cubical homology has become increasingly amenable to explicit computation.
Following the introduction of discrete homotopy and associated cubical homology in the setting of metric spaces~\cite{BCW14}, computational methods for discrete homology have developed substantially.
In particular, efficient algorithms for the first and higher discrete homology groups, as well as reduction techniques for higher-dimensional computations, have been developed~\cite{EK25,EK26,EKK26}.

One source of applications of discrete homotopy theory is the topology of subspace arrangements.
Barcelo and Smith~\cite{BS08} used discrete fundamental groups of connectivity graphs associated with Boolean lattices to obtain combinatorial information about the complements of $3$-equal subspace arrangements.
Subsequently, Barcelo, Severs, and White~\cite{BSW11} introduced $k$-parabolic subspace arrangements associated with finite real reflection groups and proposed a broader relationship between complements of subspace arrangements and discrete homotopy groups of Coxeter complexes.
More precisely, let $W$ be a finite real reflection group of rank $n$, let $C(W)$ be its Coxeter complex, and let $\Wc_{n,k}(W)$ denote its $k$-parabolic subspace arrangement.
They conjectured that the homotopy groups of the complement of $\Wc_{n,k}(W)$ are isomorphic to the discrete homotopy groups of the corresponding connectivity graph of $C(W)$.

\begin{conj}[{\cite[Conjecture~6.2]{BSW11}}]\label{conj:BSW}
  For every $m\geq0$, there is an isomorphism
  \begin{equation*}
    \pi_m\bigl(M(\Wc_{n,k}(W))\bigr)\cong A_m\bigl(\Gamma^{n-k+1}(C(W))\bigr).
  \end{equation*}
\end{conj}

In type $A$, the $k$-parabolic subspace arrangement is the essentialization of the classical $k$-equal subspace arrangement.
Conjecture~\ref{conj:BSW} therefore asserts a direct correspondence between the ordinary homotopy theory of $k$-equal subspace arrangements and the discrete homotopy theory of connectivity graphs of Coxeter complexes.

Our main result shows that this correspondence does not hold in general.
In fact, the failure already occurs at degree two.
For the Coxeter group of type $A_5$ and the $4$-parabolic arrangement, the conjectural correspondence in degree two compares a second discrete homotopy group with the second homotopy group of the arrangement complement, and we show that these groups are not isomorphic.
The groups remain nonisomorphic after tensoring with $\mathbb{F}_2$.

For the Coxeter group of type $A_n$, we write $G_{n,k}=\Gamma^{n-k+1}(C(A_n))$.

\begin{thm}\label{thm:main}
  We have
  \begin{equation*}
    \pi_2\bigl(M(\Wc_{5,4}(A))\bigr)\otimes\Fb_2\cong\Fb_2^{49},\qquad A_2(G_{5,4})\otimes\Fb_2=0.
  \end{equation*}
\end{thm}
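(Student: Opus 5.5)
The plan is to compute the two sides separately: the topological side from known results on $k$-equal arrangements, and the discrete side by reducing to discrete cubical homology and then computing that mod $2$.

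\textbf{The arrangement side.} In type $A$, $\Wc_{5,4}(A)$ is the essentialization of the classical $4$-equal arrangement in $\Rb^6$. Its complement is therefore homotopy equivalent to the complement $M_{6,4}$ of the $4$-equal arrangement: the essentialization only splits off the contractible factor given by the diagonal line. By Bj\"orner--Welker, for $k\ge 3$ the complement $M_{n,k}$ is $(k-3)$-connected and its integral cohomology is free. For $k=4$ this gives simple connectivity, so Hurewicz yields $\pi_2(M_{6,4})\cong H_2(M_{6,4};\Zb)$. The lowest nonzero reduced homology sits in degree $k-2=2$, with free rank
\[
\sum_{i=k}^{n}\binom{n}{i}\binom{i-1}{k-1}=\binom64\binom33+\binom65\binom43+\binom66\binom53=15+24+10=49 .
\]
Since $H_1=0$, $H_2$ is free of rank $49$, and hence $\pi_2\otimes\Fb_2\cong\Fb_2^{49}$.

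\textbf{The discrete side: reduction to homology.} The graph $G_{5,4}=\Gamma^{2}(C(A_5))$ has as vertices the $720$ chambers, i.e.\ the permutations in $S_6$. Two chambers are adjacent when they share a face of the prescribed dimension, which amounts to a coset condition in the parabolic subgroups of $S_6$. I would proceed in three steps.
\begin{enumerate}
\item Establish $A_1(G_{5,4})=0$. I would take this from the degree-one case of the correspondence, proved by Barcelo--Severs--White (with $\pi_1(M)=0$ because $k=4$). Failing that, I would show directly that every discrete loop is null by writing the generating $4$-cycles and $5$-cycles coming from rank-two parabolic cosets.
\item Apply the discrete Hurewicz theorem of Carranza--Kapulkin, which gives $A_2(G_{5,4})\cong DH_2(G_{5,4})$, the second discrete cubical homology.
\item Use the universal coefficient theorem for the cubical chain complex of free abelian groups. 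It gives an injection $DH_2(G_{5,4})\otimes\Fb_2\hookrightarrow DH_2(G_{5,4};\Fb_2)$, so it suffices to prove $DH_2(G_{5,4};\Fb_2)=0$.
\end{enumerate}

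\textbf{The main obstacle: computing $DH_2(G_{5,4};\Fb_2)$.} This is the step I expect to be hardest. The chain groups in degrees $1,2,3$ consist of all graph maps from $I^m$ into a $720$-vertex graph modulo degeneracies, so they are large. I would use the reduction techniques of Emmett--Kapulkin type, which quotient out semi-degenerate and symmetric cubes and collapse via discrete homotopy equivalences. I would also exploit the $S_6\times\Zb/2$ symmetry, which enters only as a check, since the computation is over $\Fb_2$. The aim is to bring the degree-$2$ boundary data down to a size where sparse rank computations over $\Fb_2$ are feasible. The final verification is then
\[
\dim\ker\partial_2-\rk\partial_3=0 .
\]
As a sanity check, the same pipeline should reproduce known values in smaller cases, for instance $A_4$.
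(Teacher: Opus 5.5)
Your reductions are exactly the paper's: $M(\Wc_{5,4}(A))\simeq M_{6,4}$, Bj\"orner--Welker and Hurewicz for the first claim; connectivity, $A_1(G_{5,4})=0$ from Barcelo--Severs--White, the discrete Hurewicz theorem and the injection $DH_2(G_{5,4})\otimes\Fb_2\hookrightarrow DH_2(G_{5,4};\Fb_2)$ for the second. (A small correction on the first side: freeness of $H_2(M_{6,4};\Zb)$ comes from $\Ext(H_2,\Zb)\subseteq H^3(M_{6,4};\Zb)=0$, not from $H_1=0$. The vanishing of $H_1$ only gives $H^2\cong\Hom(H_2,\Zb)$.)

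The genuine gap is that the step carrying all of the content, $DH_2(G_{5,4};\Fb_2)=0$, is never established. ``Reduce and check $\dim\ker\partial_2-\rk\partial_3=0$'' restates the claim. Your plan gives neither an outcome nor a structure that would make one checkable. The scale is real: there are $4053$ based non-degenerate $2$-cubes, so $\dim C_2^D(G_{5,4};\Fb_2)=720\cdot4053\approx2.9\times10^6$, with many more $3$-cubes. The free $S_6$-action also cannot simply be divided out over $\Fb_2$. Nor is the answer predictable: in the paper's computation a nonzero intermediate group $\Fb_2^{479}$ appears and is killed only at a later stage.

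The idea you are missing is the support-rank filtration. Filter the normalized complex by the rank of the smallest standard parabolic $W_I$ containing the based translate $\sigma(u_0)^{-1}\sigma$. This is preserved by faces, degeneracies and connections. Each graded piece splits as a direct sum of chain complexes $\bigoplus_{|J|=r}\bigoplus_{gW_J}C_*^{g,J}$, with $C_*^{g,J}\cong C_*^{e,J}$ by left translation. That is how the symmetry is used mod $2$: through a genuine direct sum rather than a quotient. Each local computation is done once per standard $J$, and only the connecting maps need global translates. Over $\Fb_2$ the steps are as follows.
\begin{enumerate}
\item Rank one gives $H_2(F_1)=0$ and $H_1(F_1)\cong\Fb_2^{1081}$.
\item Each rank-two coset contributes a one-dimensional local $H_2$, so $H_2(F_2,F_1)\cong\Fb_2^{1560}$. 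Since $\delta_2$ has rank $1081$, this leaves $H_2(F_2)\cong\Fb_2^{479}$.
\item For $|J|=3,4$ the local $\partial_3$ is onto $C_2^{e,J}$, shown with explicit cones and prisms without enumerating all $3$-cubes. So those relative $H_2$ vanish, and $\delta_3$ has rank $479$, killing $H_2(F_2)$.
\item Every non-degenerate $2$-cube has support rank at most four, so $H_2(F_4)=0$ already gives $DH_2(G_{5,4};\Fb_2)=0$.
\end{enumerate}
A direct Emmett--Kapulkin-style computation is legitimate in principle. But until it is run and its output recorded, your proposal does not prove the discrete half of the theorem.
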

The ordinary homotopy group in this example follows from the known topology of $k$-equal subspace arrangements.
The results of Bj\"{o}rner and Welker~\cite{BW95}, together with the Hurewicz theorem, imply that
\begin{equation*}
  \pi_2\bigl(M(\Wc_{5,4}(A))\bigr)\otimes\Fb_2\cong\Fb_2^{49}.
\end{equation*}
It remains to compute the discrete homotopy group.
Barcelo--Severs--White showed that $G_{5,4}$ is connected and $A_1(G_{5,4})=0$, so the discrete Hurewicz theorem~~\cite{CK24} reduces the problem to the computation of the second discrete cubical homology of $G_{5,4}$.

Our main tool for this computation is a filtration of the normalized cubical chain complex by parabolic support.
The filtration decomposes the homological calculation into local pieces determined by standard parabolic subgroups of $S_6$.
While recent algorithms provide increasingly efficient methods for computing discrete homology~\cite{EK25,EK26,EKK26}, our purpose here is to organize the computation in a way that uses the structure of $G_{5,4}$ explicitly.
Using this filtration, we prove that $DH_2(G_{5,4};\Fb_2)=0$, and hence
\begin{equation*}
  A_2(G_{5,4})\otimes\Fb_2=0.
\end{equation*}

The paper is organized as follows.
In Section~\ref{sec:discrete_htp}, we recall the basic notions of discrete homotopy and discrete cubical homology and their cubical models.
In Section~\ref{sec:k-parabolic}, we recall $k$-parabolic subspace arrangements and establish the ordinary homotopy part of Theorem~\ref{thm:main}.
In Section~\ref{sec:discrete-side}, we study the graph $G_{5,4}$ and prove the required vanishing result using the support-rank filtration.
The homology calculation is computer-assisted, and the computational details and verification procedures are given in Appendix~\ref{app:computation}.

\section{Discrete homotopy and homology of graphs}\label{sec:discrete_htp}
In this section, we recall the basic definitions and results in discrete homotopy theory for graphs and the associated cubical theory, following~\cite{BL05,BBdLL06,CK24}.

\subsection{Connectivity graphs and discrete homotopy theory}
We begin by recalling how a simplicial complex gives rise to a graph through its connectivity graph.

\begin{defi}
  A \textbf{graph} $G$ consists of a vertex set $V_G$ together with a reflexive and symmetric relation $\sim$ on $V_G$.
\end{defi}

\begin{defi}[{\cite{BKLW01}}]
  Let $\Delta$ be a simplicial complex of dimension $d$ and let $0\leq q\leq d$.
  The \textbf{$q$-connectivity graph $\Gamma^q(\Delta)$} of $\Delta$ is the graph whose vertices are the maximal simplices of $\Delta$ of dimension at least $q$, with $\sigma\sim\tau$ if and only if $\dim(\sigma\cap\tau)\geq q$.
\end{defi}
Figure~\ref{fig:conn_grph} illustrates an example of the connectivity graphs associated with a simplicial complex.
\begin{figure}[htbp]
  \centering
  \begin{tabular}{cccc}
    \begin{minipage}{0.22\linewidth}
      \centering
      \begin{tikzpicture}[scale=0.5]
        \foreach \angle/\nextangle in {90/162,162/234,234/306,306/18,18/90}
        \filldraw[fill=gray!50] (0,0) -- (\angle:2cm) -- (\nextangle:2cm) -- cycle;
        \foreach \angle in {90,162,234,306,18,90}
        \draw (0,0) -- (\angle:2cm);
        \foreach \angle in {90,162,234,306,18,90}
        \node[draw,circle,fill=black,inner sep=2pt] at (\angle:2cm) {};
        \node[draw,circle,fill=black,inner sep=2pt] at (0,0) {};
      \end{tikzpicture}
    \end{minipage}
     &
    \begin{minipage}{0.22\linewidth}
      \centering
      \begin{tikzpicture}[scale=0.4]
        \foreach \angle/\label in {198/a,270/b,342/c,54/d,126/e}
        \node[draw,circle,fill=black,inner sep=2pt] (\label) at (\angle:2cm) {};
        \draw (a) -- (b); \draw (a) -- (c); \draw (a) -- (d);
        \draw (b) -- (c); \draw (b) -- (d); \draw (b) -- (e);
        \draw (c) -- (d); \draw (c) -- (e); \draw (d) -- (e);
        \draw (e) -- (a);
      \end{tikzpicture}
    \end{minipage}
     &
    \begin{minipage}{0.22\linewidth}
      \centering
      \begin{tikzpicture}[scale=0.4]
        \foreach \angle/\label in {198/a,270/b,342/c,54/d,126/e}
        \node[draw,circle,fill=black,inner sep=2pt] (\label) at (\angle:2cm) {};
        \draw (a) -- (b); \draw (b) -- (c); \draw (c) -- (d);
        \draw (d) -- (e); \draw (e) -- (a);
      \end{tikzpicture}
    \end{minipage}
     &
    \begin{minipage}{0.22\linewidth}
      \centering
      \begin{tikzpicture}[scale=0.4]
        \foreach \angle/\label in {198/a,270/b,342/c,54/d,126/e}
        \node[draw,circle,fill=black,inner sep=2pt] (\label) at (\angle:2cm) {};
      \end{tikzpicture}
    \end{minipage}
  \end{tabular}
  \caption{The leftmost figure shows a simplicial complex $\Delta$, and the other figures show the associated connectivity graphs $\Gamma^0(\Delta), \Gamma^1(\Delta), \Gamma^2(\Delta)$.}\label{fig:conn_grph}
\end{figure}

We next recall the graph-theoretic notions used in the definition of discrete homotopy groups.

\begin{defi}
  For graphs $G$ and $H$, we make the following definitions.
  \begin{itemize}
    \item the \textbf{Cartesian product} $G\Box H$ is the graph with vertex set $V_G\times V_H$ such that
          \begin{equation*}
            (u_1,v_1)\sim(u_2,v_2)
          \end{equation*}
          if either $u_1=u_2$ and $v_1\sim v_2$, or $u_1\sim u_2$ and $v_1=v_2$;
    \item a \textbf{graph map} $f:G\to H$ is a set map $V_G\to V_H$ such that
          \begin{equation*}
            u\sim v\Longrightarrow f(u)\sim f(v).
          \end{equation*}
  \end{itemize}
\end{defi}

\begin{defi}
  Let $G$ be a graph.
  A \textbf{subgraph} $A$ of $G$ is a graph such that $V_A\subseteq V_G$ and, if $u\sim_A v$, then $u\sim_G v$.
  Let $A\subseteq G$ and $B\subseteq H$ be subgraphs.
  A \textbf{relative graph map} $f:(G,A)\longrightarrow(H,B)$ is a graph map $f:G\to H$ such that $f(A)\subseteq B$.
\end{defi}

\begin{defi}
  For an integer $m\geq0$, let $I_m$ be the graph with vertex set $V_{I_m}=\{0,1,\dots,m\}$, where two vertices $i,j$ are adjacent if and only if $|i-j|\leq1$.
  We also use the infinite interval graph $I_\infty$, whose vertex set is $\Zb$, with the same adjacency condition.

  For $n\geq1$ and $M\geq0$, let $I_{\geq M}^{\Box n}$ be the induced subgraph of $I_\infty^{\Box n}$ on the vertex set
  \begin{equation*}
    \{(u_1,\dots,u_n)\in\Zb^n\mid |u_i|\geq M\text{ for some }i\}.
  \end{equation*}
  See Figures~\ref{fig:I_infty} and~\ref{fig:I_geqM} for examples.
\end{defi}

\begin{figure}[h]
  \centering
  \begin{minipage}[b]{0.45\linewidth}
    \centering
    \raisebox{0.3cm}{
      \begin{tikzpicture}
        \foreach \x in {-1.5,-1,-0.5,0,0.5,1,1.5}{
            \foreach \y in {-1.5,-1,-0.5,0,0.5,1,1.5}{
                \fill (\x,\y) circle (1.5pt);
              }
            \draw (\x,-1.75) -- (\x,1.75);
            \node at (\x,-1.9) {$\vdots$};
            \node at (\x,2.1) {$\vdots$};
          }
        \foreach \y in {-1.5,-1,-0.5,0,0.5,1,1.5}{
            \draw (-1.75,\y) -- (1.75,\y);
            \node at (-2,\y) {$\dots$};
            \node at (2.1,\y) {$\dots$};
          }
      \end{tikzpicture}
    }
    \caption{The graph $I_{\infty}^{\Box 2}$.}
    \label{fig:I_infty}
  \end{minipage}
  \hfill
  \begin{minipage}[b]{0.45\linewidth}
    \centering
    \begin{tikzpicture}
      \foreach \x in {-1.5,-1,1,1.5}{
          \foreach \y in {-1.5,-1,-0.5,0,0.5,1,1.5}{
              \fill (\x,\y) circle (1.5pt);
            }
          \draw (\x,-1.75) -- (\x,1.75);
        }
      \foreach \x in {-0.5,0,0.5}{
          \foreach \y in {-1.5,-1,1,1.5}{
              \fill (\x,\y) circle (1.5pt);
            }
          \draw (\x,-1.75) -- (\x,-1);
          \draw (\x,1) -- (\x,1.75);
        }
      \foreach \y in {-1.5,-1,1,1.5}{
          \draw (-1.75,\y) -- (1.75,\y);
        }
      \foreach \y in {-0.5,0,0.5}{
          \draw (-1.75,\y) -- (-1,\y);
          \draw (1,\y) -- (1.75,\y);
        }
      \foreach \x in {-1.5,-0.5,0.5,1.5}{
          \node at (\x,-1.9) {$\vdots$};
        }
      \foreach \x in {-1.5,-1,-0.5,0,0.5,1,1.5}{
          \node at (\x,2.1) {$\vdots$};
        }
      \foreach \y in {-1.5,-1,-0.5,0,0.5,1,1.5}{
          \node at (-2,\y) {$\dots$};
          \node at (2.1,\y) {$\dots$};
        }
      \node at (2.35,0) {$0$};
      \node at (2.35,1) {$M$};
      \node at (2.35,-1) {$-M$};
      \node at (0,-2) {$0$};
      \node at (1,-2) {$M$};
      \node at (-1,-2) {$-M$};
    \end{tikzpicture}
    \caption{The graph $I_{\geq M}^{\Box 2}$.}
    \label{fig:I_geqM}
  \end{minipage}
\end{figure}

We now define discrete homotopy of graph maps.

\begin{defi}[{\cite{BL05}}]
  Two graph maps $f,g:G\to H$ are called \textbf{discrete homotopic}, denoted by $f\simeq g$, if there exist an integer $m\geq0$ and a graph map $F:G\Box I_m\longrightarrow H$ such that $F(\bullet,0)=f$ and $F(\bullet,m)=g$.
  We call $F$ a \textbf{discrete homotopy} from $f$ to $g$.
\end{defi}

The notion of discrete homotopy is different from ordinary homotopy of the underlying one-dimensional simplicial complex.
For example, let $C_n$ denote the cycle graph with $n$ vertices.
Every $C_n$ is homotopy equivalent to $S^1$ as a one-dimensional simplicial complex.
However, in discrete homotopy theory, the small cycles $C_3$ and $C_4$ are discrete homotopy equivalent to a single vertex.
Indeed, for $n=3,4$, there exist discrete homotopies $F:C_n\Box I_m\to C_n$ from $\id_{C_n}$ to the constant graph map $c_0$ (see Figures~\ref{fig:C3} and~\ref{fig:C4}).
On the other hand, $C_n$ is not discrete homotopy equivalent to a single vertex for $n\geq5$.

\begin{figure}[h]
  \begin{minipage}[b]{0.48\columnwidth}
    \centering
    \begin{tikzpicture}
      \fill (0,0) circle (1.5pt);
      \fill (1.6,0) circle (1.5pt);
      \fill (0.8,0.8) circle (1.5pt);
      \fill (0,1.5) circle (1.5pt);
      \fill (1.6,1.5) circle (1.5pt);
      \fill (0.8,2.3) circle (1.5pt);
      \fill (3.5,0.5) circle (1.5pt);
      \fill (5.1,0.5) circle (1.5pt);
      \fill (4.3,1.8) circle (1.5pt);
      \draw (0,0) -- (1.6,0) -- (0.8,0.8) --cycle;
      \draw (0,1.5) -- (1.6,1.5) -- (0.8,2.3) -- cycle;
      \draw (0,0) -- (0,1.5) -- (1.6,1.5) -- (1.6,0) -- cycle;
      \draw (0.8,0.8) -- (0.8,1.4);
      \draw (0.8,1.6) -- (0.8,2.3);
      \draw[->] (2,1) -- (3,1);
      \draw (3.5,0.5) -- (5.1,0.5) -- (4.3,1.8) -- cycle;
      \draw[->] (1.6, 2) .. controls (2.6,2.6) .. (3.5, 1.7);
      \draw[->] (1.6, -0.5) .. controls (2.6,-1.1) .. (3.5, -0.2);
      \node at (-0.2, 0) {0};
      \node at (1.8, 0) {1};
      \node at (1, 0.9) {2};
      \node at (-0.2, 1.5) {0};
      \node at (1.8, 1.5) {0};
      \node at (0.8, 2.5) {0};
      \node at (3.5, 0.2) {0};
      \node at (5.1, 0.2) {1};
      \node at (4.3, 2.1) {2};
      \node at (2.6,2.6) {$c_0$};
      \node at (2.6,-1.2) {$\id_{C_3}$};
      \node at (2.5, 1.3) {$F$};
    \end{tikzpicture}\caption{Discrete homotopy $C_3 \Box I_1 \to C_3$ between $c_0$ and $\id_{C_3}$.}\label{fig:C3}
  \end{minipage} \quad
  \begin{minipage}[b]{0.48\columnwidth}
    \centering
    \begin{tikzpicture}
      \fill (0,0) circle (1.5pt);
      \fill (1.6,0) circle (1.5pt);
      \fill (2,1) circle (1.5pt);
      \fill (0.4,1) circle (1.5pt);
      \fill (0,1.5) circle (1.5pt);
      \fill (1.6,1.5) circle (1.5pt);
      \fill (2,2.5) circle (1.5pt);
      \fill (0.4,2.5) circle (1.5pt);
      \fill (0,3) circle (1.5pt);
      \fill (1.6,3) circle (1.5pt);
      \fill (2,4) circle (1.5pt);
      \fill (0.4,4) circle (1.5pt);
      \fill (4,1.2) circle (1.5pt);
      \fill (5.6,1.2) circle (1.5pt);
      \fill (5.6,2.8) circle (1.5pt);
      \fill (4,2.8) circle (1.5pt);
      \draw (0.4,1) -- (0,0) -- (1.6,0) -- (2,1);
      \draw (0.4,2.5) -- (0,1.5) -- (1.6,1.5) -- (2,2.5);
      \draw (0,3) -- (1.6,3) -- (2,4) -- (0.4,4) -- cycle;
      \draw (0,0) -- (0,1.5) -- (0,3);
      \draw (1.6,0) -- (1.6,1.5) -- (1.6,3);
      \draw (2,1) -- (2,2.5) -- (2,4);
      \draw (0.4,1) -- (1.5,1);
      \draw (1.7,1) -- (2,1);
      \draw (0.4,2.5) -- (1.5,2.5);
      \draw (1.7,2.5) -- (2,2.5);
      \draw (0.4,1) -- (0.4,1.4);
      \draw (0.4,1.6) -- (0.4,2.5);
      \draw (0.4,2.5) -- (0.4,2.9);
      \draw (0.4,3.1) -- (0.4,4);
      \draw[->] (2.5,2) -- (3.5,2);
      \draw (4,1.2) -- (5.6,1.2) -- (5.6,2.8) -- (4,2.8) -- cycle;
      \draw[->] (2.5, 4) .. controls (3.4,4) .. (4, 3.5);
      \draw[->] (2.5, 0) .. controls (3.4,0) .. (4, 0.5);
      \node at (-0.2, 0) {0};
      \node at (1.8,0) {1};
      \node at (2.2,1) {2};
      \node at (0.2,1) {3};
      \node at (-0.2,1.5) {0};
      \node at (1.8,1.5) {0};
      \node at (2.2,2.5) {3};
      \node at (0.2,2.5) {3};
      \node at (-0.2,3) {0};
      \node at (1.8,3) {0};
      \node at (2.2,4) {0};
      \node at (0.2,4) {0};
      \node at (3,2.3) {$F$};
      \node at (4,0.9) {0};
      \node at (5.6,0.9) {1};
      \node at (5.6,3.1) {2};
      \node at (4,3.1) {3};
      \node at (3.4,4.1) {$c_0$};
      \node at (3.5,-0.3) {$\id_{C_4}$};
    \end{tikzpicture}\caption{Discrete homotopy $C_4 \Box I_2 \to C_4$ between $c_0$ and $\id_{C_4}$.}\label{fig:C4}
  \end{minipage}
\end{figure}

\begin{defi}
  Two relative graph maps $f,g:(G,A)\longrightarrow(H,B)$ are said to be \textbf{relatively discrete homotopic}, denoted by $f\simeq g$, if there exist an integer $m\geq0$ and a graph map $F:G\Box I_m\longrightarrow H$ such that $F(\bullet,0)=f$, $F(\bullet,m)=g$, and $F|_{A\Box I_m}:A\Box I_m\longrightarrow B$ is a discrete homotopy from $f|_A$ to $g|_A$.
  We call $F$ a \textbf{relative discrete homotopy} from $f$ to $g$.
\end{defi}

\begin{defi}[{\cite{CK24}}]
  Let $n\geq1$ and $u_0\in V_G$.
  The \textbf{$n$-th discrete homotopy group of $G$ at $u_0$} is the set
  \begin{equation*}
    A_n(G,u_0)=\left\{(I_\infty^{\Box n},I_{\geq M}^{\Box n})\longrightarrow(G,u_0)\ \middle|\ M\geq0\right\}/\simeq,
  \end{equation*}
  where $\simeq$ denotes relative discrete homotopy.
\end{defi}

\begin{rem}
  The set $A_0(G,u_0)$ is defined to be the pointed set of connected components of $G$, with distinguished element the component containing $u_0$.
  For $n\geq1$, $A_n(G,u_0)$ has a group structure, and it is abelian for $n\geq2$ (see, for instance,~\cite[Proposition~3.5]{BBdLL06}).
\end{rem}

\subsection{Cubical model for discrete homotopy theory}
We review the cubical model for discrete homotopy theory, following~\cite{CK23,CK24}.
We use cubical sets with positive and negative connections.

Let $\square$ denote the cubical category whose objects are
\begin{equation*}
  [1]^n=\{0\leq1\}^n,\qquad n\geq0.
\end{equation*}
Its morphisms are generated under composition by the following maps:
\begin{itemize}
  \item the \textbf{face maps} $\partial_{i,\varepsilon}^n:[1]^{n-1}\to[1]^n$, for $1\leq i\leq n$ and $\varepsilon\in\{0,1\}$, given by
        \begin{equation*}
          \partial_{i,\varepsilon}^n(x_1,\dots,x_{n-1})=(x_1,\dots,x_{i-1},\varepsilon,x_i,\dots,x_{n-1});
        \end{equation*}
  \item the \textbf{degeneracy maps} $\sigma_i^n:[1]^n\to[1]^{n-1}$, for $1\leq i\leq n$, given by
        \begin{equation*}
          \sigma_i^n(x_1,\dots,x_n)=(x_1,\dots,x_{i-1},x_{i+1},\dots,x_n);
        \end{equation*}
  \item the \textbf{negative connection maps} $\gamma_{i,0}^n:[1]^n\to[1]^{n-1}$, for $1\leq i\leq n-1$, given by
        \begin{equation*}
          \gamma_{i,0}^n(x_1,\dots,x_n)=(x_1,\dots,x_{i-1},\max\{x_i,x_{i+1}\},x_{i+2},\dots,x_n);
        \end{equation*}
  \item the \textbf{positive connection maps} $\gamma_{i,1}^n:[1]^n\to[1]^{n-1}$, for $1\leq i\leq n-1$, given by
        \begin{equation*}
          \gamma_{i,1}^n(x_1,\dots,x_n)=(x_1,\dots,x_{i-1},\min\{x_i,x_{i+1}\},x_{i+2},\dots,x_n).
        \end{equation*}
\end{itemize}

These maps satisfy the following cubical identities, where superscripts are omitted:
\begin{align*}
  \partial_{j,\varepsilon'}\partial_{i,\varepsilon}
   & =
  \partial_{i+1,\varepsilon}\partial_{j,\varepsilon'} \quad \text{for }j\leq i,
   &   &                                                                                     \\
  \sigma_j\partial_{i,\varepsilon}
   & =
  \begin{cases}
    \partial_{i-1,\varepsilon}\sigma_j,   & j<i, \\
    \id,                                  & j=i, \\
    \partial_{i,\varepsilon}\sigma_{j-1}, & j>i,
  \end{cases}                                               \\
  \sigma_i\sigma_j
   & =
  \sigma_j\sigma_{i+1} \quad \text{for }j\leq i,
   &   &                                                                                     \\
  \gamma_{j,\varepsilon'}\gamma_{i,\varepsilon}
   & =
  \begin{cases}
    \gamma_{i,\varepsilon}\gamma_{j+1,\varepsilon'}, & j>i,                            \\
    \gamma_{i,\varepsilon}\gamma_{i+1,\varepsilon},  & j=i,\ \varepsilon'=\varepsilon,
  \end{cases}         \\
  \gamma_{j,\varepsilon'}\partial_{i,\varepsilon}
   & =
  \begin{cases}
    \partial_{i-1,\varepsilon}\gamma_{j,\varepsilon'}, & j<i-1,                                \\
    \id,                                               & j=i-1,i,\ \varepsilon=\varepsilon',   \\
    \partial_{j,\varepsilon}\sigma_j,                  & j=i-1,i,\ \varepsilon=1-\varepsilon', \\
    \partial_{i,\varepsilon}\gamma_{j-1,\varepsilon'}, & j>i,
  \end{cases} \\
  \sigma_j\gamma_{i,\varepsilon}
   & =
  \begin{cases}
    \gamma_{i-1,\varepsilon}\sigma_j,   & j<i, \\
    \sigma_i\sigma_i,                   & j=i, \\
    \gamma_{i,\varepsilon}\sigma_{j+1}, & j>i.
  \end{cases}
\end{align*}

A \textbf{cubical set} is a presheaf $X:\square^{\operatorname{op}}\longrightarrow\mathbf{Sets}$, and a \textbf{cubical map} is a natural transformation of such presheaves.
We write $\mathbf{cSet}$ for the category of cubical sets and cubical maps.
For a cubical set $X$, we write $X_n=X([1]^n)$ and call its elements the \textbf{$n$-cubes} of $X$.
We write cubical operators on the right; for example, if $x\in X_n$, then $x\partial_{i,\varepsilon}$ denotes its $(i,\varepsilon)$-face.
A cube is called \textbf{degenerate} if it lies in the image of a degeneracy or connection map.

For $m\geq1$, we regard $I_m^{\Box n}$ as the graph-theoretic model of the $n$-dimensional cube of length $m$.
The corresponding face, degeneracy, and connection maps are defined as follows:
\begin{itemize}
  \item the face map $\partial_{i,\varepsilon}^n:I_m^{\Box(n-1)}\to I_m^{\Box n}$, for $1\leq i\leq n$ and $\varepsilon\in\{0,1\}$, is given by
        \begin{equation*}
          \partial_{i,\varepsilon}^n(v_1,\dots,v_{n-1})=(v_1,\dots,v_{i-1},\varepsilon m,v_i,\dots,v_{n-1});
        \end{equation*}
  \item the degeneracy map $\sigma_i^n:I_m^{\Box n}\to I_m^{\Box(n-1)}$, for $1\leq i\leq n$, is given by
        \begin{equation*}
          \sigma_i^n(v_1,\dots,v_n)=(v_1,\dots,v_{i-1},v_{i+1},\dots,v_n);
        \end{equation*}
  \item the negative connection map $\gamma_{i,0}^n:I_m^{\Box n}\to I_m^{\Box(n-1)}$, for $1\leq i\leq n-1$, is given by
        \begin{equation*}
          \gamma_{i,0}^n(v_1,\dots,v_n)=(v_1,\dots,v_{i-1},\max\{v_i,v_{i+1}\},v_{i+2},\dots,v_n);
        \end{equation*}
  \item the positive connection map $\gamma_{i,1}^n:I_m^{\Box n}\to I_m^{\Box(n-1)}$, for $1\leq i\leq n-1$, is given by
        \begin{equation*}
          \gamma_{i,1}^n(v_1,\dots,v_n)=(v_1,\dots,v_{i-1},\min\{v_i,v_{i+1}\},v_{i+2},\dots,v_n).
        \end{equation*}
\end{itemize}

Let $\mathcal G$ denote the category of graphs and graph maps.
The maps above satisfy the cubical identities and therefore define a functor
\begin{equation*}
  Q_m:\square\longrightarrow\grh,\qquad [1]^n\longmapsto I_m^{\Box n}.
\end{equation*}
Consequently, for every graph $G$, the sets $\grh(I_m^{\Box n},G)$, together with the maps induced by precomposition, form a cubical set.

\begin{defi}[{\cite{CK24}}]\label{def:m-nerve}
  Let $m\geq1$ and let $G$ be a graph.
  The \textbf{$m$-nerve} of $G$ is the cubical set $N_mG$ defined by
  \begin{equation*}
    (N_mG)_n=\grh(I_m^{\Box n},G).
  \end{equation*}
  Its face, degeneracy, and connection maps are induced by precomposition with the corresponding maps between the cubical graphs $I_m^{\Box n}$.
\end{defi}

\begin{rem}
  The $1$-nerve of a graph $G$ coincides with the cubical set $M_*(G)$ introduced in~\cite{BBdLL06} (see also~\cite[Remark~3.2]{CK24}).
  Its geometric realization is the cubical complex $X_G$ associated with $G$.
\end{rem}

The following theorem shows that the cubical set $N_1G$ provides a topological model for the discrete homotopy groups of $G$.

\begin{thm}[{\cite[Theorem~5.1]{CK24}}]
  For every pointed graph $(G,v)$ and every $n\geq1$, there is a natural group isomorphism
  \begin{equation*}
    A_n(G,v)\cong\pi_n(|N_1G|,v).
  \end{equation*}
\end{thm}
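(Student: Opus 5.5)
The statement just above is the Carranza--Kapulkin comparison theorem $A_n(G,v)\cong\pi_n(|N_1G|,v)$, which the paper quotes from \cite{CK24}; still, here is how I would prove it. The plan is to pass through the cubical nerve at all scales. For each $m\ge1$ there is a cubical map $N_1G\to N_mG$ given by precomposition with the graph map $I_m^{\Box n}\to I_1^{\Box n}$ that collapses each coordinate (send $0$ to $0$ and everything else to $1$, or use an iterated connection). Taking the colimit over subdivisions gives a cubical set $NG=\operatorname{colim}_m N_mG$. An element of $A_n(G,v)$ is a map $(I_\infty^{\Box n},I_{\geq M}^{\Box n})\to(G,v)$. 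Restricting it to the box $[-M,M]^n$ yields an $n$-cube of $N_{2M}G$ whose boundary is degenerate at $v$, and hence a sphere in $NG$. Relative discrete homotopies become cubical homotopies in the same way, so we obtain a well-defined homomorphism $A_n(G,v)\to\pi_n(|NG|,v)$.

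The key steps, in order, are as follows.

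First, I would show that $NG$ is a Kan complex, in the cubical sense with connections. Given an open box of maps $I_m^{\Box(n-1)}\to G$, I would fill it by padding: extend the length to $2m$ and use the connection maps (max/min) to retract the cube $I_{2m}^{\Box n}$ onto the open box. This retraction is a graph map because max and min are graph maps on $I_m$. Since $NG$ is Kan, its combinatorial homotopy groups agree with $\pi_n(|NG|)$. Those combinatorial groups are then, essentially by definition, $A_n(G,v)$: $n$-cubes with boundary at $v$, modulo homotopy, where homotopies are exactly maps from a longer box.

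Second, I would show that the inclusion $N_1G\to NG$ is a weak equivalence. This is the main obstacle. I would approach it by showing that each transition map $N_mG\to N_{2m}G$ (subdivision) is a weak equivalence, and then use that filtered colimits preserve weak equivalences. For the subdivision step, I would try to construct a cubical "subdivision/last-vertex" comparison, analogous to $\operatorname{Ex}$ and its natural map in simplicial sets. Concretely, I would identify $N_mG$ with maps out of the subdivided cube $\operatorname{sd}^{\,}$ applied $\log m$ times, realize $|I_m^{\Box n}|$ as a cube subdivided into $m^n$ subcubes, and prove that $\operatorname{sd}\dashv \operatorname{Ex}$ gives $X\to\operatorname{Ex}X$ a weak equivalence, following Kan's argument adapted to cubical sets with connections. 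Connections are needed here because the cubical identities above are what make the cubical model structure behave like the simplicial one, so that triangulation preserves the homotopy type.

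Finally, I would combine the two steps to get $A_n(G,v)\cong\pi_n(|NG|,v)\cong\pi_n(|N_1G|,v)$. For naturality, I would check that each map in the construction is natural in graph maps $G\to H$; this holds because every step is given by precomposition. For compatibility with the group structures, I would note that concatenation of boxes in $I_\infty^{\Box n}$ corresponds to the composition of cubes in a Kan cubical set.
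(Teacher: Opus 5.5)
The paper does not prove this statement; it quotes it from \cite{CK24}, so your sketch has to stand on its own. Its overall shape is the natural one and fits the setup of \cite{CK24}, where the $m$-nerves are introduced for this purpose: form $NG=\operatorname{colim}_m N_mG$, show it is Kan, identify its combinatorial $\pi_n$ with $A_n(G,v)$, and compare with $N_1G$. However, your filling step is wrong as written. A graph map is $1$-Lipschitz for path metrics. The corners $(0,2m)$ and $(2m,2m)$ of the missing face of $I_{2m}^{\Box 2}$ are at distance $2m$ in the square but $6m$ in the open box. So no retraction of $I_{2m}^{\Box 2}$ onto its open box exists, by connections or otherwise. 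What you need instead is a graph map from a larger cube $I_M^{\Box n}$ to the open box of $I_m^{\Box n}$ whose restriction to each remaining face is the collapse map of your colimit system. For $n=2$ the same distance count already forces $M\geq 3m$, and such maps must be built for every $n$ and every missing face. Two other points are not ``by definition''. First, the agreement of combinatorial and topological homotopy groups for cubical Kan complexes with connections is a theorem in its own right. Second, your transition maps pad on one side only, while a representative of a class in $A_n(G,v)$ can be enlarged symmetrically. Well-definedness of $A_n(G,v)\to\pi_n(NG,v)$ therefore needs a translation homotopy.

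The genuine gap is your second step, which carries the real content of the theorem and which you only state as an intention. With a subdivision $\operatorname{sd}$ defined compatibly with connections, you can identify $\operatorname{Ex}N_mG$ with $N_{2m}G$ and the transition map with the natural map $v\colon X\to\operatorname{Ex}X$. But then you need $v$ to be a weak equivalence for $X=N_mG$, and these cubical sets are not Kan in general. For example, in $N_1I_2$ the open box omitting $\partial_{2,1}$ with vertex values $f(0,0)=f(1,0)=1$, $f(0,1)=0$, $f(1,1)=2$ has no filler, since $0\not\sim 2$. For non-fibrant $X$ this is not a formal consequence of $\operatorname{sd}\dashv\operatorname{Ex}$ plus a natural weak equivalence $\operatorname{sd}\Rightarrow\mathrm{id}$. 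That adjunction argument only shows $v_Y$ is a weak equivalence for fibrant $Y$. To pass to general $X$ via a fibrant replacement $j\colon X\to Y$, you would need $\operatorname{Ex}(j)$ to be a weak equivalence, and a right Quillen functor need not preserve weak equivalences out of non-fibrant objects. Even simplicially, Kan's theorem for arbitrary $X$ needs a separate combinatorial argument. Adapting it to cubical sets with connections, including checking that $\operatorname{sd}$ preserves monomorphisms, is exactly the missing work. As it stands, your proposal reduces the statement to an unproved cubical analogue of Kan's $\operatorname{Ex}$ theorem. To close the gap, either prove that analogue, or show directly (for instance by explicit fillers) that the transition maps $N_mG\to N_{m'}G$ are weak equivalences.
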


\subsection{Discrete homology and the Hurewicz theorem}
Finally, we review the discrete homology groups of graphs and the corresponding Hurewicz theorem.
Since the cubical model of a graph is given by the cubical set $N_1G$, the relevant homology theory is its normalized cubical homology.

Let $G$ be a graph.
For $n\geq0$, let $L_n^D(G)$ be the free abelian group generated by graph maps $\sigma:I_1^{\Box n}\to G$.
Equivalently, the generators of $L_n^D(G)$ are the $n$-cubes of the cubical set $N_1G$.
For such a cube $\sigma$, $1\leq i\leq n$, and $\varepsilon\in\{0,1\}$, write $\sigma\partial_{i,\varepsilon}:I_1^{\Box(n-1)}\to G$ for its $(i,\varepsilon)$-face.
The boundary $\partial:L_n^D(G)\to L_{n-1}^D(G)$ is defined by
\begin{equation*}
  \partial\sigma=\sum_{i=1}^n\sum_{\varepsilon=0}^1(-1)^{i+\varepsilon}\sigma\partial_{i,\varepsilon}.
\end{equation*}
Let $D_n^D(G)\subseteq L_n^D(G)$ be the subgroup generated by degenerate cubes.
Here a cube is called degenerate if it lies in the image of a degeneracy map or a connection map of the cubical set $N_1G$.
Set
\begin{equation*}
  C_n^D(G)=L_n^D(G)/D_n^D(G).
\end{equation*}
The cubical identities imply that the boundary descends to the quotient, so $C_*^D(G)$ is the normalized cubical chain complex of $N_1G$.
The \textbf{discrete homology groups} of $G$ are
\begin{equation*}
  DH_n(G)=H_n(C_*^D(G)).
\end{equation*}
For a coefficient ring $R$, we write
\begin{equation*}
  DH_n(G;R)=H_n(C_*^D(G)\otimes R).
\end{equation*}
The construction is functorial in $G$.
Indeed, a graph map $f:G\to H$ sends a cube $\sigma:I_1^{\Box n}\to G$ to the composite $f\circ\sigma:I_1^{\Box n}\to H$, and hence induces a chain map $C_*^D(G)\to C_*^D(H)$.

For a pointed graph $(G,v)$, let $\widetilde{DH}_n(G,v)$ denote the reduced discrete homology group, namely the reduced normalized cubical homology of the pointed cubical set $(N_1G,v)$.
Similarly, for a coefficient ring $R$, we write $\widetilde{DH}_n(G,v;R)$ for the corresponding reduced homology group with coefficients in $R$.

The following theorem is the discrete version of the Hurewicz theorem for graphs.

\begin{thm}[{\cite[Theorem~5.8]{CK24}}]\label{thm:discrete-Hurewicz}
  Let $n\geq2$ and let $(G,v)$ be a pointed connected graph.
  Suppose that $A_i(G,v)=0$ for all $1\leq i<n$.
  Then the discrete Hurewicz homomorphism
  \begin{equation*}
    A_n(G,v)\longrightarrow\widetilde{DH}_n(G,v)
  \end{equation*}
  is an isomorphism.
\end{thm}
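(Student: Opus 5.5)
The statement is the discrete Hurewicz theorem, quoted from \cite[Theorem~5.8]{CK24}. The plan is to reduce it to the classical Hurewicz theorem for the geometric realization $|N_1G|$, using the identification $A_n(G,v)\cong\pi_n(|N_1G|,v)$ from \cite[Theorem~5.1]{CK24}. Two things must be checked: that the normalized cubical chain complex $C_*^D(G)$ computes the singular homology of $|N_1G|$, and that the Hurewicz maps on the two sides correspond.

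First, since $G$ is connected, $|N_1G|$ is path connected. Indeed, a path in $G$ is a chain of $1$-cubes of $N_1G$, so $A_0$ is trivial. By \cite[Theorem~5.1]{CK24}, the hypothesis $A_i(G,v)=0$ for $1\le i<n$ then makes $|N_1G|$ an $(n-1)$-connected CW complex.

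Second, I would invoke the standard comparison for cubical sets with connections. The normalized cubical chains of a cubical set $X$ are quasi-isomorphic to the cellular chains of $|X|$, and hence to its singular chains. With connections, each nondegenerate cube contributes exactly one cell of $|X|$, and the cubes in the image of degeneracies and connections collapse in the realization. Quotienting by $D_*^D$ is precisely what yields the cellular chain complex; this is the cubical analogue of the normalization theorem for simplicial sets. Applying this to $X=N_1G$, and using the reduced version relative to the base vertex, gives $\widetilde{DH}_n(G,v)\cong\widetilde H_n(|N_1G|)$. The classical Hurewicz theorem then gives $\pi_n(|N_1G|,v)\cong H_n(|N_1G|)$ for $n\ge2$.

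Third, I would check naturality so that the composite is the discrete Hurewicz homomorphism. A class in $A_n(G,v)$ is represented by a map $(I_\infty^{\Box n},I_{\ge M}^{\Box n})\to(G,v)$, which amounts to a map $I_{2M}^{\Box n}\to G$ with boundary sent to $v$. Its discrete Hurewicz image is the sum, with appropriate signs, of the unit $1$-cubes making up this grid. Under the realization, this grid of cubes is a cellular subdivision of a map $(I^n,\partial I^n)\to(|N_1G|,v)$, and its fundamental class is exactly that cubical sum. The two Hurewicz maps therefore agree.

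The main obstacle is the second step: the normalized cubical chains with connections compute the homology of the realization. This requires showing that the degenerate subcomplex, including the connection-degenerate cubes, is acyclic. Equivalently, one must show that realization is compatible with the cellular filtration by nondegenerate cubes, via an Eilenberg--Zilber type lemma for cubical sets with connections. I would rely on the model-structure results of \cite{CK23}, where $|\cdot|$ is shown to be a Quillen equivalence and normalization is treated, rather than reprove it.
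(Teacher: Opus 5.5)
The paper gives no proof of this statement; it is quoted from \cite{CK24}. Your reduction is the standard route, and its overall logic is sound: pass to $|N_1G|$ via the isomorphism $A_n(G,v)\cong\pi_n(|N_1G|,v)$ of \cite[Theorem~5.1]{CK24}, identify $C_*^D(G)$ with the cellular chains of $|N_1G|$, apply the classical Hurewicz theorem, and check that the two Hurewicz maps agree.

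Your description of the main obstacle, however, is wrong, and that part of the plan would fail if pursued. For cubical sets the degenerate subcomplex is \emph{not} acyclic. Already the one-point cubical set has a single cube $x_q$ in each degree, with $\partial x_q=\sum_{i}(-1)^i(x_{q-1}-x_{q-1})=0$. So $D_*$ has homology $\Zb$ in every positive degree, and the unnormalized complex has the wrong homology. This is exactly where the analogy with the simplicial normalization theorem breaks down: in the cubical setting normalization is forced, not a homotopy-invariant simplification. In particular, the acyclicity claim is not equivalent to the Eilenberg--Zilber formulation you give next.

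What you actually need is only that second formulation. Every cube of $N_1G$ should be uniquely a degeneracy or connection of a non-degenerate cube, so that $|N_1G|$ is a CW complex with one $q$-cell per non-degenerate $q$-cube. A face of the form $y\sigma_i$ or $y\gamma_{i,\varepsilon}$ has characteristic map factoring through a lower-dimensional cube, so it contributes zero to the cellular boundary. Then $C_*^D(G)$ is literally the cellular chain complex. No acyclicity argument is involved, and a Quillen equivalence alone would not give this chain-level identification.

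The acyclicity statement that does hold nearby concerns connection-degenerate cubes modulo degeneracy-degenerate ones. You need it only if the realization in \cite[Theorem~5.1]{CK24} is the plain cubical one (such as $X_G$ of \cite{BBdLL06}), whose cellular chains are normalized by degeneracies alone. So check which realization is meant.

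Two smaller points about your third step. The sum runs over the $(2M)^n$ unit $n$-cubes of the grid, not $1$-cubes, all with coefficient $+1$. Matching the two Hurewicz maps also requires knowing how the isomorphism of \cite[Theorem~5.1]{CK24} acts on representatives, which must be taken from that paper.
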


\section{$k$-parabolic subspace arrangements}\label{sec:k-parabolic}
\subsection{$k$-parabolic subspace arrangements and the type $A$ case}
We briefly recall the definition of the $k$-parabolic subspace arrangement introduced by Barcelo--Severs--White~\cite{BSW11}.
For background on hyperplane and subspace arrangements, see~\cite{Bjo94,OT92,Sta07}; for reflection groups, see~\cite{Hum90}.

\begin{defi}
  A \textbf{subspace arrangement} in a finite-dimensional real vector space $V$ is a finite collection $\Ac$ of linear subspaces of $V$ such that no element of $\Ac$ properly contains another.
  The \textbf{complement} of $\Ac$ is denoted by $M(\Ac)=V\setminus\bigcup_{X\in\Ac}X$.
\end{defi}

Recall that the \textbf{intersection poset} $L(\Ac)$ of $\Ac$ is the set of all intersections of elements of $\Ac$, ordered by reverse inclusion: $X\leq Y$ if and only if $X\supseteq Y$.
Let $W$ be a finite real reflection group of rank $n$ acting on a Euclidean vector space $V$.
Fix a set $S$ of simple reflections.
For a reflection $s\in W$, let $H_s=\{x\in V\mid sx=x\}$ be its reflecting hyperplane.
The collection of all reflecting hyperplanes of $W$ is called the \textbf{Coxeter arrangement} $\Hc(W)=\{H_s\mid s\in W\text{ is a reflection}\}$.

We recall the standard notions of parabolic subgroups of Coxeter groups (see~\cite{Hum90}).
\begin{defi}
  For $I\subseteq S$, set $W_I=\langle I\rangle$.
  The subgroup $W_I$ is called a \textbf{standard parabolic subgroup} of $W$, and a subgroup $G\subseteq W$ is called a \textbf{parabolic subgroup} of $W$ if $G=wW_Iw^{-1}$ for some $w\in W$ and $I\subseteq S$.
  The rank of $G$ is $|I|$.
  We call $G=wW_Iw^{-1}$ \textbf{irreducible} if the Coxeter diagram induced by $I$ is connected.
\end{defi}

For a subgroup $G\subseteq W$, set $\Fix(G)=\left\{x\in V\ \middle|\ gx=x\text{ for all }g\in G\right\}$.
Conversely, for $X\in L(\Hc(W))$, set $\Gal(X)=\left\{w\in W\ \middle|\ wx=x\text{ for all }x\in X\right\}$.
By the Galois correspondence due to Barcelo--Ihrig~\cite{BI99}, the assignments $G\longmapsto\Fix(G)$ and $X\longmapsto\Gal(X)$ give a correspondence between the poset of parabolic subgroups of $W$ and the intersection poset $L(\Hc(W))$ (see also~\cite{BSW11}).

\begin{defi}
  Let $\Pc_{n,k}(W)$ be the collection of irreducible parabolic subgroups of $W$ of rank $k-1$.
  The \textbf{$k$-parabolic subspace arrangement} associated with $W$ is
  \begin{equation*}
    \Wc_{n,k}(W)=\left\{\Fix(G)\ \middle|\ G\in\Pc_{n,k}(W)\right\}.
  \end{equation*}
\end{defi}

We now specialize to type $A$.
Let $W$ be of type $A_n$, so that $W=S_{n+1}$ acts on
\begin{equation*}
  V=\left\{(x_1,\ldots,x_{n+1})\in\Rb^{n+1}\ \middle|\ \sum_{i=1}^{n+1}x_i=0\right\}
\end{equation*}
by permuting coordinates.
The simple reflections are the adjacent transpositions $s_i=(i,i+1)$ for $1\leq i\leq n$.
For a $k$-element subset $T=\{i_1,\ldots,i_k\}\subseteq[n+1]\coloneqq\{1,\ldots,n+1\}$, define $X_T=\{x\in V\mid x_{i_1}=\cdots=x_{i_k}\}$.
Under the Galois correspondence, the irreducible parabolic subgroups of rank $k-1$ correspond precisely to the subspaces $X_T$.
Hence
\begin{equation*}
  \Wc_{n,k}(A)=\left\{X_T\ \middle|\ T\in\binom{[n+1]}{k}\right\}.
\end{equation*}
The $k$-equal subspace arrangement in $\Rb^{n+1}$ consists of the subspaces defined by $x_{i_1}=x_{i_2}=\cdots=x_{i_k}, 1\leq i_1<\cdots<i_k\leq n+1$.
We denote its complement by $M_{n+1,k}$.
Each of these subspaces contains the diagonal $D=\{(t,\ldots,t)\in\Rb^{n+1}\mid t\in\Rb\}$.
Since $\Rb^{n+1}=V\oplus D$, the essentialization of this arrangement is obtained by restricting it to $V$.
Consequently, the resulting arrangement is precisely $\mathcal W_{n,k}(A)$, and its complement satisfies $M_{n+1,k}\cong M(\mathcal W_{n,k}(A))\times\Rb$.
The topology and combinatorics of $k$-equal subspace arrangements have been studied extensively (see, for instance,~\cite{BW95,Kho96,SW12,BKK23}).

\subsection{The ordinary homotopy calculation}
We now establish the ordinary homotopy part of Theorem~\ref{thm:main}.
The calculation follows from the work of Bj\"{o}rner and Welker on the topology of $k$-equal subspace arrangements.

\begin{thm}[{\cite[Theorem~1.1]{BW95}}]\label{thm:BW}
  Let $M_{N,k}$ denote the complement of the $k$-equal subspace arrangement on $N$ coordinates.
  Then the following hold:
  \begin{itemize}
    \item $M_{N,k}$ is $(k-3)$-connected;
    \item the integral cohomology groups $H^d(M_{N,k};\Zb)$ are free abelian for all $d\geq0$;
    \item $H^d(M_{N,k};\Zb)\neq0$ if and only if $d=t(k-2)$ for some integer $t$ satisfying $0\leq t\leq\lfloor N/k\rfloor$;
    \item if $k\geq3$, then
          \begin{equation*}
            \rk H^{k-2}(M_{N,k};\Zb)=\sum_{i=k}^{N}\binom{N}{i}\binom{i-1}{k-1}.
          \end{equation*}
  \end{itemize}
\end{thm}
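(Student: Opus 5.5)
The plan is to compute the cohomology of $M_{N,k}$ from the combinatorics of its intersection lattice via the Goresky--MacPherson formula, and to obtain the connectivity by a separate codimension argument. The intersection lattice of the $k$-equal arrangement in $\Rb^N$ is the lattice $\Pi_{N,k}$ of set partitions of $[N]$ in which every non-singleton block has size at least $k$. The subspace attached to a partition $x$ with $b(x)$ blocks has codimension $N-b(x)$.

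The Goresky--MacPherson formula gives
\begin{equation*}
  \widetilde H^d(M_{N,k};\Zb)\cong\bigoplus_{x>\hat0}\widetilde H_{\,N-b(x)-2-d}\bigl(\Delta(\hat0,x);\Zb\bigr),
\end{equation*}
where $\Delta(\hat0,x)$ is the order complex of the open interval. So everything reduces to the homotopy types of the lower intervals. For $x$ with non-singleton blocks of sizes $i_1,\dots,i_t$ we have $[\hat0,x]\cong\prod_j\Pi_{i_j,k}$. The order complex of a proper part of a product of bounded posets is, up to homotopy, a suspension of the join of the proper parts of the factors. It therefore suffices to understand the proper part of $\Pi_{n,k}$ itself.

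The key step is to show that $\Pi_{n,k}$ is (non-pure) EL-shellable. I would do this with an edge labelling adapting the standard labelling of the partition lattice: label a cover by the maximum or minimum of the merged blocks, ordered appropriately. Shellability makes $\overline{\Pi}_{n,k}$ homotopy equivalent to a wedge of spheres, one for each falling maximal chain. A falling chain must start with one $k$-block and then add single elements, and a count of such chains gives:
\begin{itemize}
  \item spheres of dimension $n-3-(t-1)(k-2)$ only;
  \item exactly $\binom{n-1}{k-1}$ spheres in the top dimension $n-3$.
\end{itemize}

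Feeding these into the product/join description, every nonzero contribution lands in a degree $d=t(k-2)$. Here $t$ is the number of non-singleton blocks of $x$ for the top spheres, and more generally the total number of "$k$-jumps" in the chain, which is bounded by $\lfloor N/k\rfloor$. For the converse direction, nonvanishing in each such degree, it suffices to take $x$ consisting of $t$ blocks of size $k$ and check that its contribution is nonzero. Freeness of $H^d(M_{N,k};\Zb)$ follows because all the pieces are wedges of spheres.

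For the rank in degree $k-2$, only $t=1$ contributes, so only partitions $x$ with a single non-singleton block of size $i\geq k$ matter. There are $\binom Ni$ such $x$, each contributing $\binom{i-1}{k-1}$ top spheres, which gives the stated sum. Finally, $(k-3)$-connectivity is independent of the above. Each subspace has codimension $k-1$, and the complement of a finite union of linear subspaces of codimension at least $c$ is $(c-2)$-connected by transversality: spheres of dimension at most $c-2$, and their null-homotopies, can be pushed off the subspaces by general position.

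The main obstacle is the shellability of $\Pi_{n,k}$ and the exact enumeration of falling chains by length. I would first try to restrict Björner--Wachs type lexicographic labellings of $\Pi_n$ to $\Pi_{n,k}$, and verify that the lexicographically first chain in every interval is the unique rising one.
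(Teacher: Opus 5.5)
The paper does not prove this theorem; it only cites Bj\"orner--Welker. Your outline is the standard route: Goresky--MacPherson, the product decomposition of lower intervals, a wedge-of-spheres model for $\overline{\Pi}_{n,k}$, Boolean intervals for nonvanishing, and general position for connectivity. The connectivity, freeness, nonvanishing (via $t$ disjoint $k$-blocks) and rank-counting steps are sound in principle.

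There is, however, a concrete error in the central homotopy-type claim: it is mis-indexed by $k-2$. Call a cover of $\Pi_{n,k}$ a \emph{$k$-jump} if it forms a new block out of $k$ singletons.
\begin{itemize}
\item A jump lowers the number of blocks by $k-1$, and every other cover lowers it by $1$. So a maximal chain with $s$ jumps has length $n-1-s(k-2)$.
\item Every cover of $\hat0$ is a jump, so $s\ge 1$.
\item Hence $\Delta(\overline{\Pi}_{n,k})$ has dimension $n-k-1$. Its spheres have dimensions $n-3-t(k-2)$ with $1\le t\le\lfloor n/k\rfloor$, and the $\binom{n-1}{k-1}$ top spheres lie in dimension $n-k-1$, not $n-3$.
\end{itemize}
Small cases already show this: $\overline{\Pi}_{k,k}=\emptyset=S^{-1}$, and $\overline{\Pi}_{4,3}$ is four points. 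With your dimensions, the Goresky--MacPherson formula puts the top spheres of an $x$ with one block of size $i$ (codimension $i-1$) into degree $(i-1)-2-(i-3)=0$. So your bookkeeping, as written, does not produce the classes in $H^{k-2}$ that you then count.

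With the corrected indexing the argument closes. Suppose $x$ has non-singleton blocks of sizes $i_1,\dots,i_t$. Then $\Delta(\hat0,x)\simeq\Sigma^{t-1}\bigl(\overline{\Pi}_{i_1,k}*\cdots*\overline{\Pi}_{i_t,k}\bigr)$; note that the suspension is $(t-1)$-fold. This is a wedge of spheres of dimensions
\[
\sum_j\bigl(i_j-3-s_j(k-2)\bigr)+2(t-1)=\operatorname{codim}(x)-2-s(k-2),
\]
where $s=\sum_j s_j$ and $t\le s\le\lfloor N/k\rfloor$. So each contribution lands exactly in degree $d=s(k-2)$. Degree $k-2$ forces $s=t=1$ and top spheres, which gives your rank sum.

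Three smaller points:
\begin{itemize}
\item The restriction on sphere dimensions needs no enumeration of falling chains. Once shellability is known, it follows from the facet dimensions above. Only the top count $\binom{n-1}{k-1}$ requires counting.
\item Your sentence that a falling chain is ``one $k$-block followed by single elements'' describes only the top-dimensional falling chains. Chains with several jumps must also occur, since they account for the lower-dimensional spheres.
\item The $k$-jumps are not covers of $\Pi_n$, so an EL-labelling of $\Pi_n$ cannot literally be restricted to $\Pi_{n,k}$. A labelling designed for these steps is needed. The non-pure EL-shellability of $\Pi_{n,k}$ is a theorem of Bj\"orner--Wachs, and it is the real content of the step you flagged as the main obstacle.
\end{itemize}
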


\begin{prop}\label{prop:ordinary-side}
  We have $\pi_2\bigl(M(\Wc_{5,4}(A))\bigr)\otimes\Fb_2\cong\Fb_2^{49}$.
\end{prop}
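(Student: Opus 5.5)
We reduce the statement to the $4$-equal arrangement on $N=6$ coordinates and then apply Theorem~\ref{thm:BW} together with the Hurewicz theorem.

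\emph{Reduction.} As recalled above, $M_{6,4}\cong M(\Wc_{5,4}(A))\times\Rb$. Since $\Rb$ is contractible,
\begin{equation*}
  \pi_m\bigl(M(\Wc_{5,4}(A))\bigr)\cong\pi_m(M_{6,4})
\end{equation*}
for all $m$, and likewise for (co)homology. We therefore work with $M_{6,4}$.

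\emph{Connectivity and Hurewicz.} With $k=4$, Theorem~\ref{thm:BW} says that $M_{6,4}$ is $(k-3)=1$-connected. In particular it is path-connected and simply connected. The classical Hurewicz theorem then gives $\pi_2(M_{6,4})\cong H_2(M_{6,4};\Zb)$. It remains to identify $H_2$ as a free abelian group of rank $49$; tensoring with $\Fb_2$ then gives $\Fb_2^{49}$.

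\emph{From cohomology to homology.} Theorem~\ref{thm:BW} describes cohomology, so we pass to homology with the universal coefficient theorem. The space $M_{6,4}$ is an open subset of $\Rb^6$, so it has the homotopy type of a finite CW complex and its homology is finitely generated. By Theorem~\ref{thm:BW}, all $H^d(M_{6,4};\Zb)$ are free. Universal coefficients give
\begin{equation*}
  H^{d}\cong\Hom(H_d,\Zb)\oplus\Ext(H_{d-1},\Zb).
\end{equation*}
The torsion of $H_{d-1}$ contributes a torsion summand $\Ext(H_{d-1},\Zb)$ to $H^d$, so freeness of every $H^d$ forces every $H_d$ to be torsion-free. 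Hence $H_2\cong\Zb^{r}$ with $r=\rk H^2$.

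Note that $H_1=0$ already follows from simple connectivity, so only the torsion of $H_2$ needs this argument, and it uses the freeness of $H^3$. Here $H^3=0$ anyway, because $3$ is not of the form $2t$.

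\emph{Rank computation.} By the rank formula in Theorem~\ref{thm:BW} with $N=6$ and $k=4$,
\begin{equation*}
  \rk H^2(M_{6,4};\Zb)=\sum_{i=4}^{6}\binom{6}{i}\binom{i-1}{3}=15\cdot1+6\cdot4+1\cdot10=49.
\end{equation*}
Therefore $\pi_2\cong\Zb^{49}$, and tensoring with $\Fb_2$ gives $\Fb_2^{49}$.

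The argument is a routine assembly of known results, and the only point needing care is the passage from cohomological freeness to the statement about homology. This is handled by the universal coefficient step together with the finite-type property of the complement.
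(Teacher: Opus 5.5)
Your proof is correct and follows the paper's argument essentially step for step: reduction to $M_{6,4}$, simple connectivity and the cohomology of $M_{6,4}$ from Theorem~\ref{thm:BW}, the universal coefficient theorem with $H^3=0$ and finite generation to get $H_2(M_{6,4};\Zb)\cong\Zb^{49}$, and then Hurewicz. The one point to fix is your justification of finite generation: being an open subset of $\Rb^6$ does not by itself give finite CW type (for example, $\Rb^2\setminus(\Zb\times\{0\})$ has infinitely generated $H_1$), so you should instead use that $M_{6,4}$ is the complement of finitely many linear subspaces, for instance via Alexander duality in $S^6$ against the compact polyhedron obtained by compactifying their union, or via the fact that semialgebraic sets have finite CW type.
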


\begin{proof}
  As noted in Section~3.1, $M(\mathcal W_{5,4}(A))$ and $M_{6,4}$ have the same homotopy type.
  Applying Theorem~\ref{thm:BW} with $N=6$ and $k=4$, we obtain that $M_{6,4}$ is simply connected.
  Moreover, its integral cohomology is free abelian and is nonzero only in degrees $0$ and $2$, with
  \begin{equation*}
    \rk H^2(M_{6,4};\Zb)=49.
  \end{equation*}
  Thus $H^2(M_{6,4};\Zb)\cong\Zb^{49}$ and $H^3(M_{6,4};\Zb)=0$.
  Since $M_{6,4}$ is simply connected, we have $H_1(M_{6,4};\Zb)=0$.
  The universal coefficient theorem gives $\Hom(H_2(M_{6,4};\Zb),\Zb)\cong H^2(M_{6,4};\Zb)\cong\Zb^{49}$ and $\Ext(H_2(M_{6,4};\Zb),\Zb)=0$.
  Since $M_{6,4}$ has the homotopy type of a finite CW complex, $H_2(M_{6,4};\Zb)$ is finitely generated.
  It follows that $H_2(M_{6,4};\Zb)\cong\Zb^{49}$.
  By the Hurewicz theorem, $\pi_2(M_{6,4})\cong H_2(M_{6,4};\Zb)\cong\Zb^{49}$, hence $\pi_2\bigl(M(\Wc_{5,4}(A))\bigr)\cong\Zb^{49}$.
  Tensoring with $\Fb_2$ gives $\pi_2\bigl(M(\Wc_{5,4}(A))\bigr)\otimes\Fb_2\cong\Fb_2^{49}$.
\end{proof}

\section{The discrete side}\label{sec:discrete-side}
In this section, we prove the discrete part of Theorem~\ref{thm:main}.

\subsection{The graph $G_{5,4}$}
We first give an explicit combinatorial description of the graph $G_{5,4}=\Gamma^2(C(A_5))$.
Recall that the chambers of the Coxeter complex $C(A_5)$ are naturally indexed by the elements of $S_6$.
Hence the vertex set of $G_{5,4}$ is naturally identified with $S_6$, and $|V(G_{5,4})|=6!=720$.
For a permutation $w=(w_1,\ldots,w_6)\in S_6$, let $C_w$ denote the corresponding chamber.
Recall that the Coxeter complex $C(A_5)$ is the order complex of the Boolean lattice on $[6]$ with its minimal and maximal elements removed.
Under this identification, the chamber $C_w$ is the $4$-simplex corresponding to the maximal chain $\{w_1\}\subset\{w_1,w_2\}\subset\cdots\subset\{w_1,\ldots,w_5\}$.
Thus, writing $V_{w,i}=\{w_1,\ldots,w_i\}$ for $1\leq i\leq5$, the five vertices of $C_w$ are $V_{w,1},\ldots,V_{w,5}$.
Since $G_{5,4}=\Gamma^2(C(A_5))$, two chambers are adjacent if and only if they share at least three vertices.
It follows that two permutations $u,v\in S_6$ are adjacent in $G_{5,4}$ if and only if
\begin{equation*}
  \left|\left\{i\in\{1,\dots,5\}\ \middle|\ V_{u,i}=V_{v,i}\right\}\right|\geq3.
\end{equation*}

The connectivity of $G_{5,4}$ follows from the fact that the adjacent transpositions generate $S_6$.
The vanishing of its discrete fundamental group is a special case of \cite[Corollary~5.3]{BSW11}.
Hence, we have the following lemma.
\begin{lem}\label{lem:A1-vanishing}
  The graph $G_{5,4}$ is connected, and $A_1(G_{5,4})=0$.
\end{lem}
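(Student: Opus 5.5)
Connectivity of $G_{5,4}$ comes from checking that adjacent transpositions give edges of $G_{5,4}$. Then, since $S_6$ is generated by the $s_i$, every vertex is joined to the identity by a path.

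Concretely, for $w\in S_6$ and $1\le i\le 5$, the permutations $w$ and $ws_i$ differ only in positions $i,i+1$. Hence $V_{w,j}=V_{ws_i,j}$ for every $j\neq i$. This gives four common vertices, which is at least three, so $w\sim ws_i$. (In fact $C_w$ and $C_{ws_i}$ are adjacent chambers sharing a codimension-one face.) Writing any $w=s_{i_1}\cdots s_{i_r}$, the sequence $e,\ s_{i_1},\ s_{i_1}s_{i_2},\dots,w$ is then a path in $G_{5,4}$. So $G_{5,4}$ is connected, and $A_0$ is trivial at any base point.

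For $A_1(G_{5,4})=0$ I will simply invoke \cite[Corollary~5.3]{BSW11}, which handles the connectivity graphs $\Gamma^{n-k+1}(C(W))$ for $k\ge 3$ or $4$. Here $n=5$ and $k=4$, so $n-k+1=2$. The only work is to check that $G_{5,4}=\Gamma^2(C(A_5))$ and $k=4$ satisfy its hypotheses, after translating the BSW convention for $\Gamma^q$ into the one used here.

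If I had to argue directly instead, the plan would be as follows. Represent loops by closed walks in $G_{5,4}$. Reduce each edge $u\sim v$, where at least three of the five flags agree, to a walk through adjacent-transposition steps. These intermediate steps stay within the coset $uW_J$ of a rank-two standard parabolic subgroup $W_J$, where $J$ consists of the two indices at which the flags differ. Since all vertices of such a coset share three flags, they are pairwise adjacent. The relations of $S_6$ as a Coxeter group then have to be realized as null-homotopic closed walks. The relations $s_i^2$ are backtracking. The commutation relations $(s_is_j)^2$ with $|i-j|\ge2$ give $4$-cycles, which are null-homotopic. The braid relations $(s_is_{i+1})^3$ give $6$-cycles inside a coset of $W_{\{i,i+1\}}\cong S_3$. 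That coset is a complete graph in $G_{5,4}$, because all its elements share the three flags $V_{\cdot,j}$ for $j\notin\{i,i+1\}$. A $6$-cycle in a clique is filled by triangles, so it is null-homotopic.

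The main obstacle is checking that this accounts for every loop. The step I expect to need most care is showing that the replacement of a general edge by a walk within its coset is itself a discrete homotopy. This holds because the edge together with the replacing walk lies inside one clique, so the resulting cycle is a cycle in a complete graph and contracts through triangles. In the paper, however, this is all packaged in the cited corollary.
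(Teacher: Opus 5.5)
Your proposal is correct and follows the paper's argument. Connectivity holds because the adjacent transpositions generate $S_6$; your check that $w$ and $ws_i$ share the four flags $V_{\cdot,j}$ for $j\neq i$ supplies the detail the paper leaves implicit. The vanishing $A_1(G_{5,4})=0$ is a special case of \cite[Corollary~5.3]{BSW11}, and your backup direct sketch (reduce to the Cayley graph inside the rank-two coset cliques, then kill the Coxeter relators with $3$- and $4$-cycles) is also sound but not needed.
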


By Lemma~\ref{lem:A1-vanishing} and Theorem~\ref{thm:discrete-Hurewicz}, we have $A_2(G_{5,4})\cong\widetilde{DH}_2(G_{5,4},e)$.
Since $G_{5,4}$ is connected, reduced and unreduced discrete homology agree in degree two, and hence $A_2(G_{5,4})\cong DH_2(G_{5,4})$.
We will prove the vanishing $DH_2(G_{5,4};\Fb_2)=0$, from which the required vanishing $A_2(G_{5,4})\otimes\Fb_2=0$ will follow by the universal coefficient theorem.

\subsection{Support-rank filtration}
We introduce a filtration of the cubical chain complex by parabolic support.
Let $S=\{s_1,\ldots,s_5\}$ be the standard generating set of $S_6$.
For each $n\geq0$, let $u_0=(0,\ldots,0)\in I_1^{\Box n}$.

\begin{defi}
  A graph map $\sigma:I_1^{\Box n}\longrightarrow G_{5,4}$ is said to be \textbf{based} if $\sigma(u_0)=e$, where $e\in S_6=V(G_{5,4})$ denotes the identity.
\end{defi}

For a graph map $\sigma:I_1^{\Box n}\longrightarrow G_{5,4}$, define its \textbf{based translate} $\overline{\sigma}$ by $\overline{\sigma}(u)=\sigma(u_0)^{-1}\sigma(u)$.
Then $\overline{\sigma}$ is the unique based left translate of $\sigma$.

\begin{defi}
  The \textbf{support rank} of a cube $\sigma$, denoted by $\operatorname{srk}(\sigma)$, is the smallest integer $r$ for which there exists a standard parabolic subgroup $W_I$ of rank $r$ satisfying $\overline{\sigma}(I_1^{\Box n})\subseteq W_I$, where $\overline{\sigma}$ is the based translate of $\sigma$.
\end{defi}

The following lemma shows that the support rank is compatible with the left action of $S_6$ and with the cubical operators.

\begin{lem}\label{lem:support-rank}
  Let $\sigma:I_1^{\Box n}\to G_{5,4}$ be a graph map.
  \begin{enumerate}
    \item For every $g\in S_6$, we have $\operatorname{srk}(g\sigma)=\operatorname{srk}(\sigma)$.
    \item If $\tau$ is obtained from $\sigma$ by precomposition with a face, degeneracy, or connection map, then $\operatorname{srk}(\tau)\leq\operatorname{srk}(\sigma)$.
  \end{enumerate}
\end{lem}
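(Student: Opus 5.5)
Both parts follow from two facts: left multiplication by $g\in S_6$ is a graph automorphism of $G_{5,4}$, and the image of a cube can only shrink under the cubical operators. First I will verify that left translation is a graph automorphism. For $g\in S_6$ and $w=(w_1,\dots,w_6)$, the product $gw$ is the sequence $(g(w_1),\dots,g(w_6))$, so $V_{gw,i}=g(V_{w,i})$. Since $g$ is a bijection of $[6]$, $V_{gu,i}=V_{gv,i}$ holds if and only if $V_{u,i}=V_{v,i}$. The adjacency criterion of Section~4.1 is therefore invariant under left multiplication. Hence $g\sigma$ is again a graph map $I_1^{\Box n}\to G_{5,4}$, and the support rank of $g\sigma$ is well defined.

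For (1), I compute the based translate directly:
\begin{equation*}
  \overline{g\sigma}(u)=(g\sigma(u_0))^{-1}g\sigma(u)=\sigma(u_0)^{-1}\sigma(u)=\overline{\sigma}(u).
\end{equation*}
So $\overline{g\sigma}=\overline{\sigma}$. Because the support rank depends only on the based translate, $\operatorname{srk}(g\sigma)=\operatorname{srk}(\sigma)$.

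For (2), let $\tau=\sigma\circ\phi$, where $\phi$ is one of the face, degeneracy, or connection maps between cubical graphs, so $\tau(I_1^{\Box m})\subseteq\sigma(I_1^{\Box n})$. Choose $W_I$ of rank $r=\operatorname{srk}(\sigma)$ with $\overline{\sigma}(I_1^{\Box n})\subseteq W_I$, and write $a=\sigma(u_0)$. Then $\sigma(I_1^{\Box n})\subseteq aW_I$. The based point of $\tau$ is $b=\tau(u_0)$, which lies in $aW_I$, so $b=ah$ for some $h\in W_I$. Since $W_I$ is a subgroup, $b^{-1}aW_I=h^{-1}W_I=W_I$. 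This gives
\begin{equation*}
  \overline{\tau}(I_1^{\Box m})=b^{-1}\tau(I_1^{\Box m})\subseteq b^{-1}aW_I=W_I,
\end{equation*}
and therefore $\operatorname{srk}(\tau)\leq r$.

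The one subtle point is that a face map need not preserve the base vertex $u_0$: for example, $\partial_{i,1}$ sends $u_0$ to a vertex with a coordinate equal to $1$. For this reason I do not compare $\overline{\tau}$ with $\overline{\sigma}$ pointwise. The argument above instead passes through the left coset $aW_I$, which contains every vertex of the image and is unchanged by re-basing at any of its points. Everything else is routine.
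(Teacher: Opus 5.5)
Your proof is correct and follows essentially the same route as the paper: part (1) is the identical computation $\overline{g\sigma}=\overline{\sigma}$, and part (2) rests on the observation that the whole image of $\tau$ lies in the coset $\sigma(u_0)W_I$, so re-basing at $\tau(u_0)$ stays inside $W_I$. Your extra check that left multiplication is a graph automorphism of $G_{5,4}$ (via $V_{gw,i}=g(V_{w,i})$) is correct; the paper leaves this point implicit.
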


\begin{proof}
  For (1), we have $\overline{g\sigma}(u)=(g\sigma(u_0))^{-1}g\sigma(u)=\sigma(u_0)^{-1}\sigma(u)=\overline{\sigma}(u)$.

  For (2), write $\tau=\sigma\theta$, where $\theta$ is a face, degeneracy, or connection map, and suppose that $\overline{\sigma}(I_1^{\Box n})\subseteq W_I$.
  Then $\sigma(I_1^{\Box n})\subseteq\sigma(u_0)W_I$, and hence $\tau(I_1^{\Box n'})\subseteq\sigma(u_0)W_I$, where $I_1^{\Box n'}$ is the domain of $\tau$.
  In particular, for every $u\in I_1^{\Box n'}$, both $\tau(u_0)$ and $\tau(u)$ belong to the same left coset $\sigma(u_0)W_I$.
  Therefore $\tau(u_0)^{-1}\tau(u)\in W_I$.
  It follows that $\overline{\tau}(I_1^{\Box n'})\subseteq W_I$, and hence $\operatorname{srk}(\tau)\leq\operatorname{srk}(\sigma)$.
\end{proof}

In particular, cubes of support rank at most $r$ form a cubical subset of $N_1G_{5,4}$.
For $0\leq r\leq5$, let $F_rC_q^D(G_{5,4};\Fb_2)$ be the subspace generated by the non-degenerate $q$-cubes of support rank at most $r$.
By Lemma~\ref{lem:support-rank}, these subspaces form a subcomplex $F_rC_*^D(G_{5,4};\Fb_2)\subseteq C_*^D(G_{5,4};\Fb_2)$.
For brevity, write $F_r=F_rC_*^D(G_{5,4};\Fb_2)$.
We obtain a filtration
\begin{equation*}
  F_0\subseteq F_1\subseteq\cdots\subseteq F_5=C_*^D(G_{5,4};\Fb_2).
\end{equation*}
Since support rank is invariant under left translation, every cube has a unique based representative in its left $S_6$-orbit.
We use based cubes to describe the support filtration and, computationally, to enumerate orbit representatives.
The chain complex itself is not quotiented by the left $S_6$-action.

We say that a cube $\sigma:I_1^{\Box n}\to G_{5,4}$ has \textbf{exact support} $J\subseteq S$ if $\overline{\sigma}(I_1^{\Box n})\subseteq W_J$ and $\overline{\sigma}(I_1^{\Box n})\nsubseteq W_I$ for every proper subset $I\subsetneq J$.
For $r\geq1$, a subset $J\subseteq S$ with $|J|=r$, and a left coset $gW_J$, let $C_q^{g,J}$ denote the subspace of $C_q(F_r,F_{r-1})$ generated by the classes of non-degenerate $q$-cubes $\sigma$ having exact support $J$ and satisfying $\sigma(u_0)W_J=gW_J$.
Equivalently, such a cube satisfies $\sigma(I_1^{\Box q})\subseteq gW_J$.
We write $C_*^{g,J}$ for the resulting graded subspace.

\begin{lem}\label{lem:support-decomposition}
  For each $1\leq r\leq5$, there is a direct sum decomposition of chain complexes
  \begin{equation}\label{eq:support-decomposition}
    C_*(F_r,F_{r-1})=\bigoplus_{\substack{J\subseteq S\\ |J|=r}}\ \bigoplus_{gW_J\in S_6/W_J}C_*^{g,J}.
  \end{equation}
  Moreover, left translation induces an isomorphism of chain complexes $C_*^{g,J}\cong C_*^{e,J}$.
\end{lem}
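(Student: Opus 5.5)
The plan is to show that the relative complex $C_*(F_r,F_{r-1})$ already splits at the level of bases, and that its boundary respects the splitting.

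First, the graded decomposition. Every non-degenerate $q$-cube $\sigma$ of support rank exactly $r$ has an exact support $J$ with $|J|=r$. This $J$ is unique, because standard parabolic subgroups satisfy $W_I\cap W_{I'}=W_{I\cap I'}$. So the smallest standard parabolic subgroup containing $\overline{\sigma}(I_1^{\Box q})$ is $W_J$, with $J$ the intersection of all admissible $I$, and its rank equals $\operatorname{srk}(\sigma)=r$. The coset $\sigma(u_0)W_J$ is then also determined by $\sigma$. The classes of such cubes form a basis of $C_q(F_r,F_{r-1})$, and this basis is partitioned according to the pair $(J,gW_J)$. This gives \eqref{eq:support-decomposition} as graded vector spaces. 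For the equivalent description, note that $\overline{\sigma}(I_1^{\Box q})\subseteq W_J$ holds if and only if $\sigma(I_1^{\Box q})\subseteq\sigma(u_0)W_J$.

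Second, the boundary preserves each summand; this is the main point. Take $\sigma$ in $C_q^{g,J}$ and a face $\tau=\sigma\partial_{i,\varepsilon}$. Then $\tau(I_1^{\Box(q-1)})\subseteq gW_J$, so $\tau(u_0)W_J=gW_J$ and $\overline{\tau}$ lands in $W_J$. If $\tau$ is degenerate it vanishes in the normalized complex. If $\operatorname{srk}(\tau)<r$, it vanishes in the quotient by $F_{r-1}$. Otherwise $\operatorname{srk}(\tau)=r$, so the exact support $J'$ of $\tau$ lies inside $J$ (by the intersection property) and has size $r$; hence $J'=J$. Thus $\tau$ lies in $C_{q-1}^{g,J}$. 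The step I would handle with the most care is this one: I need the intersection property $W_I\cap W_{I'}=W_{I\cap I'}$ (standard, e.g.\ from \cite{Hum90}) together with the fact that rank is monotone. Both make the uniqueness of exact support rigorous, so that no face can jump to a different $J'$ of the same size.

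Finally, left translation. For $h\in S_6$, the map $\sigma\mapsto h\sigma$ sends graph maps to graph maps, because adjacency in $G_{5,4}$ is left-invariant: $V_{hw,i}=h(V_{w,i})$, so the agreement count between two permutations is preserved. This map commutes with precomposition, hence with faces, degeneracies and connections, and so preserves degeneracy. By Lemma~\ref{lem:support-rank}(1) it preserves support rank, and since $\overline{h\sigma}=\overline{\sigma}$ it also preserves exact support. It sends the coset $gW_J$ to $hgW_J$. Taking $h=g^{-1}$, it therefore restricts to a bijection between the bases of $C_*^{g,J}$ and $C_*^{e,J}$ that commutes with the boundary, with inverse given by translation by $g$. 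This is the required chain isomorphism $C_*^{g,J}\cong C_*^{e,J}$.
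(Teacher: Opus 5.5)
Your proposal is correct and follows the same route as the paper's proof. It uses uniqueness of exact support from $W_I\cap W_{I'}=W_{I\cap I'}$, the observation that every face of a cube in $C_*^{g,J}$ stays in $gW_J$ and therefore either vanishes (degenerate or of lower support rank) or lies again in $C_*^{g,J}$, and left translation by $g^{-1}$ for $C_*^{g,J}\cong C_*^{e,J}$; your explicit check that adjacency in $G_{5,4}$ is left-invariant via $V_{hw,i}=h(V_{w,i})$ is a detail the paper leaves implicit.
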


\begin{proof}
  Since standard parabolic subgroups satisfy $W_I\cap W_J=W_{I\cap J}$, every cube has a unique exact support.
  Indeed, the intersection of all standard parabolic subgroups containing the image of its based translate is again a standard parabolic subgroup.

  We next show that each $C_*^{g,J}$ is a subcomplex.
  Suppose that $\sigma$ has exact support $J$ with $|J|=r$ and $\sigma(I_1^{\Box q})\subseteq gW_J$.
  Every face $\tau$ of $\sigma$ is also contained in $gW_J$.
  Since $\tau(u_0)\in gW_J$, we have $\overline{\tau}(I_1^{\Box(q-1)})\subseteq W_J$.
  Thus the exact support of $\tau$ is a subset of $J$.
  If $\tau$ has support rank less than $r$, it vanishes in $C_*(F_r,F_{r-1})$.
  Otherwise, its exact support is $J$, and its initial vertex belongs to the same coset $gW_J$.
  Hence the relative boundary preserves $C_*^{g,J}$.
  Every non-degenerate cube of exact support rank $r$ belongs to exactly one such summand, determined by its exact support $J$ and the coset
  $\sigma(u_0)W_J$.
  Since these classes form a basis of $C_*(F_r,F_{r-1})$, the asserted direct sum decomposition follows.

  Finally, choosing a representative $g$ of the left coset $gW_J$, left translation by $g^{-1}$ preserves graph maps, non-degenerate cubes, and their boundaries, and identifies $C_*^{g,J}$ with $C_*^{e,J}$.
\end{proof}

We denote the boundary maps of $C_*^{g,J}$ by $\partial_q^{g,J}:C_q^{g,J}\to C_{q-1}^{g,J}$.
By Lemma~\ref{lem:support-decomposition}, the local relative homology depends only on the standard support $J$, and the corresponding local complexes occur with multiplicity $[S_6:W_J]$.

We use the decomposition~\eqref{eq:support-decomposition} to compute the relative homology groups and the connecting homomorphisms over $\Fb_2$.

\subsection{Computation of the second discrete homology}\label{subsec:computation}
We begin with the homology of $F_1$ and the relative homology of the pair $(F_2,F_1)$.

\begin{lem}\label{lem:rank2-computer}
  The following hold:
  \begin{equation*}
    H_1(F_1;\Fb_2)\cong\Fb_2^{1081},\qquad H_2(F_1;\Fb_2)=0,\qquad H_2(F_2,F_1;\Fb_2)\cong\Fb_2^{1560}.
  \end{equation*}
  Moreover, the connecting homomorphism $\delta_2:H_2(F_2,F_1;\Fb_2)\longrightarrow H_1(F_1;\Fb_2)$ has rank $1081$.
\end{lem}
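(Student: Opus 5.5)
The plan is to prove every assertion conceptually, with no machine computation, by combining the long exact sequences of the pairs $(F_1,F_0)$ and $(F_2,F_1)$ with Lemma~\ref{lem:support-decomposition}. Two observations about $G_{5,4}$ drive everything.

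\emph{Cosets are cliques.} If $J=\{s_{i_1},\dots,s_{i_r}\}$ with $r\leq 2$, then for $u,v$ in a common coset $gW_J$, the sets $V_{u,i}$ and $V_{v,i}$ can differ only for indices $i$ touched by $J$. For $r=1$ this is one index. For $r=2$ it is two indices, namely $i_1,i_2$ in the commuting case and $i,i+1$ when $J=\{s_i,s_{i+1}\}$. So $u$ and $v$ still share at least three of the five $V$'s, and each coset $gW_J$ with $|J|\leq2$ spans a complete graph: $K_2$, $K_4$, or $K_6$ according as $W_J$ has order $2$, $4$ (commuting pair), or $6$ ($\cong S_3$).

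\emph{Complete graphs are acyclic.} A complete graph $K$ is discrete contractible: the map sending every vertex to a fixed vertex $b$, together with $\id_K$, defines a homotopy $K\Box I_1\to K$. Using homotopy invariance of discrete homology, I will take $\widetilde{DH}_*(K;\Fb_2)=0$.

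First I treat $F_0$ and $F_1$. Cubes of support rank $0$ are constant, so $F_0$ is $720$ points, with $H_0(F_0)=\Fb_2^{720}$ and no higher homology. By Lemma~\ref{lem:support-decomposition}, $C_*(F_1,F_0)$ splits into $5\cdot 360=1800$ copies of $C_*^{e,\{s_i\}}$. Each copy is the normalized complex of the nerve of $K_2$ relative to its two vertices, since every cube in a coset of $W_{\{s_i\}}$ has support rank $\le1$. Because $K_2$ is acyclic, this relative homology is $\Fb_2$, concentrated in degree $1$ and generated by an edge. In the long exact sequence, $\delta_1:\Fb_2^{1800}\to H_0(F_0)$ sends a coset to the sum of its two endpoints. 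So $\delta_1$ is the incidence map of the Cayley graph $\Gamma$ of $(S_6,S)$, which is connected, and its rank is $719$. Hence $H_1(F_1;\Fb_2)\cong\Fb_2^{1800-719}=\Fb_2^{1081}$, the cycle space of $\Gamma$. Also $H_2(F_1;\Fb_2)=0$, because $H_2(F_0)=0$ and $H_2(F_1,F_0)=0$.

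Next I treat the pair $(F_2,F_1)$. Fix $|J|=2$. All cubes inside a coset $gW_J$ have support rank $\le 2$, so $C_*^{e,J}$ is the relative complex $C_*(N_1K)/C_*(A)$. Here $K$ is the clique on $W_J$, and $A$ is the subcomplex of cubes lying in a single coset of some $W_{\{s\}}$ with $s\in J$, together with the constant cubes. From the previous step, $A$ has the homology of the Cayley graph of $W_J$, which is a $4$-cycle or a hexagon. So $H_1(A)=\Fb_2$ and $H_{\ge2}(A)=0$. Since $K$ is acyclic, the long exact sequence gives $H_2(C_*^{e,J})\cong H_1(A)=\Fb_2$. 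This holds for all six commuting pairs, each with $720/4=180$ cosets, and for all four adjacent pairs, each with $720/6=120$ cosets. Therefore $H_2(F_2,F_1;\Fb_2)\cong\Fb_2^{1080+480}=\Fb_2^{1560}$.

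Finally, $\delta_2$ sends the generator attached to $gW_J$ to the class in $H_1(F_1)$ of the boundary cycle of that coset: a square for $m(s,t)=2$ and a hexagon for $m(s,t)=3$. Thus $\ima\delta_2$ is the span, inside the cycle space of $\Gamma$, of the relator cycles of the Coxeter presentation of $S_6$. The Cayley $2$-complex of the Coxeter presentation is simply connected, so its $H_1(-;\Fb_2)$ vanishes, and these relator cycles span the whole cycle space. Hence $\delta_2$ is surjective of rank $1081$.

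The main obstacle is rigor in two places: the identification of the relative local complexes with nerves of cycles, and the justification that $\delta_2$ sends each generator to exactly the relator cycle. For the latter I would represent each generator explicitly by a $2$-chain in $K$ whose boundary is that cycle, for instance a cone from a vertex using the contraction. As a cross-check, the computer enumeration of based cubes described in Appendix~\ref{app:computation} should confirm these ranks.
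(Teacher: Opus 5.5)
Your argument is correct, and it takes a genuinely different route from the paper. The paper's proof of this lemma is purely computational. It computes $\dim C_1(F_1)=3600$, $\dim C_2(F_1)=10800$ and the boundary ranks $719$ and $1800$. It reads the local second homology off boundary-rank tables (Table~\ref{tab:app-rank12}, and $6-1-5=0$ in support rank one). It obtains $\rk\delta_2=1081$ by Gaussian elimination on $1560$ explicit boundary columns. You replace all of this by one structural observation: a coset $gW_J$ with $|J|\le 2$ is a clique. This holds because $w([i])=[i]$ for every $w\in W_J$ and each of the at least three indices $i$ not touched by $J$, and adjacency is invariant under left translation. So each local summand is the relative complex of a contractible nerve $N_1K$ modulo the union $A$ of the edge-nerves of its boundary polygon. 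The long exact sequences then explain every number. The local $H_2$ is $H_1(A)=\Fb_2$. $H_1(F_1)$ is the cycle space of the Cayley graph, of dimension $1800-720+1$. $\delta_2$ is onto because the squares and hexagons are the $2$-faces of the permutahedron, whose $2$-skeleton is simply connected. As a bonus, $\dim H_2(F_2)=479$ is $b_2$ of that $2$-skeleton, namely $720-1800+1560-1$.

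Two refinements. First, the step you flag as the main obstacle needs no explicit cone chains. Naturality of the connecting map for $(N_1K,A)\to(F_2,F_1)$ shows that $\delta_2$ restricted to the $gW_J$ summand is $H_2(N_1K,A)\xrightarrow{\cong}H_1(A)\to H_1(F_1)\hookrightarrow H_1(F_1,F_0)$. This composite sends the generator to the indicator vector of the polygon's edges. Second, ``the homology of the Cayley graph'' must mean the cellular homology of the polygon as a $1$-complex. $A$ is not the nerve of the Cayley graph, and the $4$-cycle is discretely contractible. Your exact sequence for $A$ relative to its constant cubes gives exactly the right thing.

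Your approach is computer-free, and it works verbatim for $G_{n,k}$ with any $n$ and $k\ge 4$, where cosets of rank at most two remain cliques. The paper's uniform pipeline, by contrast, continues into support ranks three and four. There cosets are no longer cliques: an element of $W_{\{s_1,s_2,s_3\}}$ can move three of the five sets $V_{w,i}$. So no such shortcut is available at those stages.
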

\begin{proof}
  The computations in Appendix~\ref{app:through-f2} give $\dim H_1(F_1;\Fb_2)=1081$ and $H_2(F_1;\Fb_2)=0$.
  For each exact support $J$ of rank two, the local second homology is one-dimensional (see Table~\ref{tab:app-rank12}).
  By the decomposition~\eqref{eq:support-decomposition}, these local computations give $H_2(F_2,F_1;\Fb_2)\cong\Fb_2^{1560}$.
  Finally, the computation of the connecting homomorphism gives $\rk\delta_2=1081$ (see Table~\ref{tab:app-delta2}).
\end{proof}

\begin{prop}\label{prop:H2-F2}
  We have
  \begin{equation*}
    H_2(F_2;\Fb_2)\cong\Fb_2^{479}.
  \end{equation*}
\end{prop}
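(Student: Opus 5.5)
The plan is to use the long exact sequence of the pair $(F_2,F_1)$ with $\Fb_2$ coefficients:
\begin{equation*}
  H_2(F_1;\Fb_2)\longrightarrow H_2(F_2;\Fb_2)\longrightarrow H_2(F_2,F_1;\Fb_2)\xrightarrow{\ \delta_2\ }H_1(F_1;\Fb_2).
\end{equation*}
All the inputs are provided by Lemma~\ref{lem:rank2-computer}. Since $H_2(F_1;\Fb_2)=0$, the map $H_2(F_2;\Fb_2)\to H_2(F_2,F_1;\Fb_2)$ is injective. By exactness, its image equals $\ker\delta_2$. Hence
\begin{equation*}
  H_2(F_2;\Fb_2)\cong\ker\delta_2.
\end{equation*}

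Next, by Lemma~\ref{lem:rank2-computer} we have $\dim H_2(F_2,F_1;\Fb_2)=1560$ and $\rk\delta_2=1081$. Rank--nullity then gives
\begin{equation*}
  \dim\ker\delta_2=1560-1081=479.
\end{equation*}
Since we are working over the field $\Fb_2$, this yields $H_2(F_2;\Fb_2)\cong\Fb_2^{479}$.

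This argument has no real obstacle. All the substantive work lies in the computer-assisted inputs of Lemma~\ref{lem:rank2-computer}, in particular the rank of $\delta_2$. The only point to check is that the sequence above is the genuine long exact sequence of the short exact sequence of chain complexes $0\to F_1\to F_2\to F_2/F_1\to0$. This holds because $F_1$ is a subcomplex by Lemma~\ref{lem:support-rank}, and because tensoring with $\Fb_2$ preserves exactness: the quotient $F_2/F_1$ is spanned by a subset of the basis of non-degenerate cubes.

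As a consistency check on the count, $1560=\sum_{|J|=2}[S_6:W_J]$. The rank-two subsets $J\subseteq S$ consist of $4$ adjacent pairs with $|W_J|=6$ and $6$ commuting pairs with $|W_J|=4$. This gives $4\cdot120+6\cdot180=1560$, which agrees with the local one-dimensional homology in each summand of~\eqref{eq:support-decomposition}.
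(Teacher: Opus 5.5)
Your proof is correct and follows the paper's own argument: the long exact sequence of $(F_2,F_1)$ together with $H_2(F_1;\Fb_2)=0$ identifies $H_2(F_2;\Fb_2)$ with $\ker\delta_2$, and rank--nullity gives $1560-1081=479$. One small remark: the $F_r$ are already defined over $\Fb_2$, so your aside about tensoring preserving exactness is not needed, though it does no harm.
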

\begin{proof}
  By the long exact sequence of the pair $(F_2,F_1)$ and Lemma~\ref{lem:rank2-computer}, we have $H_2(F_2;\Fb_2)\cong\ker\delta_2$, which has dimension $1560-1081=479$.
\end{proof}

We next study the pair $(F_3,F_2)$.
\begin{lem}\label{lem:rank3-computer}
  We have
  \begin{equation*}
    H_2(F_3,F_2;\Fb_2)=0.
  \end{equation*}
  Moreover, the connecting homomorphism $\delta_3:H_3(F_3,F_2;\Fb_2)\longrightarrow H_2(F_2;\Fb_2)$ has rank $479$.
\end{lem}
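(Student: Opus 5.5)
The plan mirrors the proof of Lemma~\ref{lem:rank2-computer}: reduce everything to finitely many local computations via Lemma~\ref{lem:support-decomposition}, and then compute one global connecting map.

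First, for $H_2(F_3,F_2;\Fb_2)$, I use the decomposition \eqref{eq:support-decomposition} with $r=3$. It gives
\begin{equation*}
  H_2(F_3,F_2;\Fb_2)\cong\bigoplus_{|J|=3}\bigl(H_2(C_*^{e,J})\bigr)^{[S_6:W_J]}.
\end{equation*}
So it suffices to show $H_2(C_*^{e,J})=0$ for each of the $\binom{5}{3}=10$ subsets $J\subseteq S$ of size three. Up to the diagram automorphism $s_i\mapsto s_{6-i}$ (conjugation by the longest element, which preserves $G_{5,4}$), these $J$ have Coxeter types $A_3$, $A_2\times A_1$, or $A_1^3$. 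For each $J$, I would enumerate the based non-degenerate cubes of dimensions $1,2,3$ with image in $W_J$ and exact support $J$; note $|W_J|\le 24$. I would then write down the relative boundary matrices $\partial_3^{e,J}$ and $\partial_2^{e,J}$ over $\Fb_2$ and check $\rk\partial_3^{e,J}=\dim C_2^{e,J}-\rk\partial_2^{e,J}$. Adjacency in $G_{5,4}$ restricted to a coset $gW_J$ is explicit: two permutations are adjacent iff they agree on at least three of the prefix sets $V_{\cdot,i}$. This keeps the enumeration finite and small, and the computation goes in the appendix alongside Table~\ref{tab:app-rank12}.

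Second, for $\delta_3$, I first compute a basis of $H_3(F_3,F_2;\Fb_2)$ locally, again through the decomposition. For each $J$ I pick cycle representatives $z$ in $C_3^{e,J}$ whose boundaries lie in $F_2$, and translate them by coset representatives $g$ of $S_6/W_J$. The map $\delta_3$ sends $gz$ to the class of $\partial(gz)$ in $H_2(F_2;\Fb_2)$. To evaluate this, I use Proposition~\ref{prop:H2-F2}, which identifies $H_2(F_2;\Fb_2)$ with $\ker\delta_2\subseteq H_2(F_2,F_1;\Fb_2)$, since $H_2(F_1;\Fb_2)=0$. Concretely, a $2$-cycle of $F_2$ is determined by its image in $H_2(F_2,F_1)$. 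That image is computed by projecting $\partial(gz)$ onto each rank-two summand $C_2^{h,K}$ and reading off the coefficient on the one-dimensional local generator for the pair $(h,K)$.

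This yields an explicit $\Fb_2$-matrix with $1560$ rows and columns indexed by the local $H_3$ generators. Its image automatically lies in the $479$-dimensional subspace $\ker\delta_2$, and I compute its rank, aiming for $479$. Equivariance under left translation by $S_6$ lets one organize the matrix by orbits, but the rank must still be computed globally.

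The main obstacle is this last step. The local computations are small and symmetric, but $\delta_3$ mixes different cosets and supports, so surjectivity onto $\ker\delta_2$ is a genuinely global rank computation. I would first check whether the $S_6$-module structure decomposes it. Failing that, I would do a direct sparse Gaussian elimination over $\Fb_2$, cross-checked against an independent computation of $\dim H_2(F_3;\Fb_2)$ via the long exact sequence. With $H_2(F_3,F_2)=0$, that dimension should come out to $479-\rk\delta_3=0$.
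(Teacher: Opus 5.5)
Your proposal is correct and follows the paper's proof. The decomposition \eqref{eq:support-decomposition} reduces $H_2(F_3,F_2;\Fb_2)=0$ to local checks for the ten rank-three supports; there your criterion is exactly the paper's surjectivity of $\partial_3^{e,J}$, because edges of $G_{5,4}$ have support rank at most two, so $C_1^{e,J}=0$. The claim $\rk\delta_3=479$ is then a global $\Fb_2$-rank computation on translated local relative $3$-cycles, decoded through the injection $H_2(F_2;\Fb_2)\hookrightarrow H_2(F_2,F_1;\Fb_2)$ onto $\ker\delta_2$. The paper only economizes on this: it spans $C_2^{e,J}$ with cones and prisms rather than all $3$-cubes, decodes local classes by reducing modulo $\ima\partial_3^{h,K}$ (the quotient $Q$) rather than by a raw coefficient, and stops once $479$ independent images appear, so it never needs a basis of $H_3(F_3,F_2;\Fb_2)$, which would require $4$-cubes.
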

\begin{proof}
  By the decomposition~\eqref{eq:support-decomposition}, it suffices to consider the local relative complexes for the supports $J$ of rank three.
  There are three supports of Coxeter type $A_3$, six of type $A_2\times A_1$, and one of type $A_1\times A_1\times A_1$.
  The computation in Appendix~\ref{app:rank-three} shows that $\partial_3^{g,J}:C_3^{g,J}\longrightarrow C_2^{g,J}$ is surjective for every such support (see Table~\ref{tab:app-rank3}).
  Consequently, $H_2(F_3,F_2;\Fb_2)=0$.
  The same computation produces $479$ linearly independent classes in the image of $\delta_3$ (see Table~\ref{tab:app-delta3}).
  Since Proposition~\ref{prop:H2-F2} gives $\dim H_2(F_2;\Fb_2)=479$, it follows that $\rk\delta_3=479$.
\end{proof}

We finally study the pair $(F_4,F_3)$.
\begin{lem}\label{lem:rank4-computer}
  We have
  \begin{equation*}
    H_2(F_4,F_3;\Fb_2)=0.
  \end{equation*}
  Moreover, every based non-degenerate $2$-cube has support rank at most four.
\end{lem}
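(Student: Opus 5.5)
The plan has two parts: a structural argument for the support-rank bound, followed by a finite local computation for the relative homology.

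\emph{The support bound.} Take a based non-degenerate $2$-cube $\sigma\colon I_1^{\Box 2}\to G_{5,4}$. Its four vertices are $e=\sigma(0,0)$, $a=\sigma(1,0)$, $b=\sigma(0,1)$ and $c=\sigma(1,1)$, with the path $e\sim a\sim c\sim b\sim e$. I will first describe adjacency in $G_{5,4}$ in terms of parabolic subgroups. Two permutations $u,v$ are adjacent exactly when $V_{u,i}=V_{v,i}$ for at least three indices $i\in\{1,\dots,5\}$. This says that $u^{-1}v$ lies in a standard parabolic subgroup $W_K$ with $|K|\le 2$, where $K=\{s_i : V_{u,i}\neq V_{v,i}\}$. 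Indeed, $V_{u,i}=V_{v,i}$ for all $i\notin K$ holds precisely when $v\in uW_{S\setminus\{s_i:i\notin K\}}$.

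From this, $a\in W_{K_1}$ and $b\in W_{K_2}$ with $|K_1|,|K_2|\le 2$. Also $c\in aW_{K_3}$ and $c\in bW_{K_4}$, with each $|K_j|\le2$. So every vertex lies in $W_{K_1\cup K_2\cup K_3}$, which has rank at most six. That bound is useless as it stands, so I need a sharper argument. The index set $D(u,v)=\{i : V_{u,i}\neq V_{v,i}\}$ behaves like a metric on the $V_i$-coordinates, and the support of the based translate is the union, over all four vertices $w$, of the sets $D(e,w)$.

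Since $D(e,c)\subseteq D(e,a)\cup D(a,c)$, the support is contained in $D(e,a)\cup D(e,b)\cup D(a,c)$. The point is that $D(a,c)$ and $D(b,c)$ both contain $D(e,c)\setminus(D(e,a)\cap D(e,b))$. To show that a rank-five support cannot occur, I would argue by contradiction. Suppose every index $1,\dots,5$ appears in $D(e,a)\cup D(e,b)\cup D(e,c)$. Each of the four edge sets has size at most two. The index sets must then be distributed so that some $V_i$ changes along only one edge of the square. That is impossible around a closed $4$-cycle: the value of $V_i$ returns to $V_{e,i}$, so it must change along at least two edges. Counting gives at most $4\cdot 2/2=4$ indices that change anywhere. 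Therefore the support rank is at most four. I expect this parity-style count to be the cleanest route; it is also easily confirmed by computer enumeration of based squares.

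\emph{The vanishing of $H_2(F_4,F_3;\Fb_2)$.} By Lemma~\ref{lem:support-decomposition}, it suffices to show that for each $J\subseteq S$ with $|J|=4$ the local map $\partial_3^{e,J}\colon C_3^{e,J}\to C_2^{e,J}$ is surjective onto the cycles. In practice I would show it is surjective outright, as was done for rank three in Lemma~\ref{lem:rank3-computer}. There are five such $J$, and their Coxeter types are $A_4$, $A_3\times A_1$, $A_2\times A_2$, $A_1\times A_3$ and $A_4$. The reversal symmetry $s_i\leftrightarrow s_{6-i}$ reduces these to three cases. For each case, I will enumerate the based non-degenerate $2$-cubes and $3$-cubes inside $W_J$ with exact support $J$. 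Then I compute the $\Fb_2$-rank of the relative boundary matrix, discarding faces of smaller support, and check that it equals $\dim C_2^{e,J}$.

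A conceptual shortcut would be to exhibit, for each rank-four based square, an explicit $3$-cube having it as the unique exact-support-$J$ face. The rank-four squares are very rigid: each edge changes exactly two indices, and each index changes exactly twice. This suggests a cylinder construction, moving one of the four indices through an intermediate chamber.

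The main obstacle is this local surjectivity, specifically its size. The $3$-cubes in $W_J$ with $|W_J|$ up to $120$ are numerous. I would first try the explicit cylinder construction and fall back on the $\Fb_2$ rank computation, recorded alongside the earlier tables, if the construction fails.
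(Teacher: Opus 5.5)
For the vanishing $H_2(F_4,F_3;\Fb_2)=0$ you follow the paper: reduce via Lemma~\ref{lem:support-decomposition} to surjectivity of $\partial_3^{e,J}$ for the three Coxeter types, then check it by an $\Fb_2$-rank computation. You differ on the second assertion, and your route is genuinely different and correct.

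The paper gets the support bound from a complete computer enumeration of the $4053$ based non-degenerate squares (Table~\ref{tab:app-support-distribution}). Your counting argument proves it by hand:
\begin{itemize}
\item adjacency means $u^{-1}v\in W_K$ with $|K|\le 2$, and the support of $w$ is $D(e,w)$, so each edge of the square changes at most two indices;
\item an index in the support must change on at least two edges of the closed $4$-cycle;
\item hence $2\operatorname{srk}(\sigma)\le 8$.
\end{itemize}
This holds for all squares. It removes the computer from the step $(F_4)_2=C_2^D(G_{5,4};\Fb_2)$ in Proposition~\ref{prop:discrete-homology}, and the equality case gives the rigidity of rank-four squares. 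The paper's enumeration is not wasted, since the squares must be listed anyway for the local rank computations.

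Delete your intermediate claim that $D(a,c)$ and $D(b,c)$ both contain $D(e,c)\setminus(D(e,a)\cap D(e,b))$. It is false: take an index that changes only on the edges $\{e,a\}$ and $\{b,c\}$. Only $D(e,c)\setminus D(e,a)\subseteq D(a,c)$ holds, and your count never uses the stronger claim.

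Three remarks on the computational half.
\begin{itemize}
\item Your own adjacency description shows that an edge has support of size at most two. So $C_1^{e,J}=0$ for $|J|\ge 3$, the relative $\partial_2$ vanishes, and ``surjective onto the cycles'' is the same as surjective; no hedge is needed.
\item The boundary matrix must be formed over all left $W_J$-translates of the based cubes, not just the based ones. Faces of a based cube are generally not based, and $\dim C_2^{e,J}$ is $|W_J|$ times the number of based squares (e.g.\ $12000=120\cdot 100$ for $A_4$). The paper also avoids enumerating all $3$-cubes by generating cones and prisms over each square and their translates, which is far cheaper than your full enumeration.
\item The cylinder shortcut is unverified. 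The paper only shows that the relative boundaries span $C_2^{e,J}$, not that each rank-four square is the relative boundary of a single $3$-cube. As written, your fallback computation is what actually proves the first assertion.
\end{itemize}
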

\begin{proof}
  By the decomposition~\eqref{eq:support-decomposition}, it suffices to consider the local relative complexes for the five standard supports $J$ of rank four. These have Coxeter types $A_4$, $A_3\times A_1$, and $A_2\times A_2$.
  The computation in Appendix~\ref{app:rank-four} shows that $\partial_3^{g,J}:C_3^{g,J}\longrightarrow C_2^{g,J}$ is surjective for every such support (see Table~\ref{tab:app-rank4}).
  Consequently, $H_2(F_4,F_3;\Fb_2)=0$.
  Finally, the complete enumeration in Appendix~\ref{app:rank-four} shows that every based non-degenerate $2$-cube has support rank at most four (see Table~\ref{tab:app-support-distribution}).
\end{proof}

\begin{prop}\label{prop:discrete-homology}
  We have
  \begin{equation*}
    DH_2(G_{5,4};\Fb_2)=0.
  \end{equation*}
\end{prop}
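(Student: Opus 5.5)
The plan is to chase the long exact sequences of the pairs $(F_r,F_{r-1})$ upward through the support-rank filtration, feeding in the lemmas above. The first goal is $H_2(F_3;\Fb_2)=0$. The exact sequence of $(F_3,F_2)$ reads
\begin{equation*}
  H_3(F_3,F_2;\Fb_2)\xrightarrow{\delta_3}H_2(F_2;\Fb_2)\longrightarrow H_2(F_3;\Fb_2)\longrightarrow H_2(F_3,F_2;\Fb_2).
\end{equation*}
By Lemma~\ref{lem:rank3-computer}, the last group vanishes and $\delta_3$ has rank $479$. By Proposition~\ref{prop:H2-F2}, $\dim H_2(F_2;\Fb_2)=479$, so $\delta_3$ is surjective. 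The map $H_2(F_2;\Fb_2)\to H_2(F_3;\Fb_2)$ is therefore zero and, by exactness, also surjective. Hence $H_2(F_3;\Fb_2)=0$.

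Next I apply the same argument to $(F_4,F_3)$. The segment
\begin{equation*}
  H_2(F_3;\Fb_2)\longrightarrow H_2(F_4;\Fb_2)\longrightarrow H_2(F_4,F_3;\Fb_2)
\end{equation*}
has both outer terms zero, using the previous step and Lemma~\ref{lem:rank4-computer}. Therefore $H_2(F_4;\Fb_2)=0$. No rank count of a connecting map is needed here.

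The last step passes from $F_4$ to $F_5=C_*^D(G_{5,4};\Fb_2)$, and this is the only place needing care. Lemma~\ref{lem:rank4-computer} says every non-degenerate $2$-cube has support rank at most four. This covers based cubes, but support rank is invariant under left translation (Lemma~\ref{lem:support-rank}(1)), so it covers all non-degenerate $2$-cubes. By Lemma~\ref{lem:support-rank}(2), faces do not raise support rank, so every non-degenerate $1$-cube and $0$-cube, being a face of a degenerate $2$-cube, also has rank at most four. This last point is not strictly needed. Thus $F_4$ and $F_5$ agree in degrees $\le2$, and the quotient complex $C_*(F_5,F_4)$ vanishes in degrees $\le2$. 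Its homology $H_2(F_5,F_4;\Fb_2)$ is then $0$, since the second chain group itself is $0$.

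The exact sequence $H_2(F_4;\Fb_2)\to H_2(F_5;\Fb_2)\to H_2(F_5,F_4;\Fb_2)$ then gives $DH_2(G_{5,4};\Fb_2)=H_2(F_5)=0$. Degree-$3$ cubes of rank five only affect $H_3$ and the boundaries into $C_2$. Adding cells in degree $3$ can only kill degree-$2$ classes, so it cannot create new ones, which is consistent with the conclusion.

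The only genuine obstacle lies in the computer-assisted inputs, namely surjectivity of the local $\partial_3^{g,J}$ and the rank of $\delta_3$. These are taken as given from the lemmas, so the proof itself reduces to the diagram chase above.
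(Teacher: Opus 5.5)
Your proof is correct and follows the paper's argument. The same exact-sequence steps give $H_2(F_3;\Fb_2)=0$ and $H_2(F_4;\Fb_2)=0$, and your final step via $C_2(F_5,F_4)=0$ is the long-exact-sequence phrasing of the paper's observation that $(F_4)_2=C_2^D(G_{5,4};\Fb_2)$, so every $2$-cycle is already a boundary in $F_4$. The one weak point is the aside on $1$- and $0$-cubes, whose justification is circular: the degenerate $2$-cube $\tau\sigma_1$ has the same support rank as $\tau$, and Lemma~\ref{lem:rank4-computer} only covers non-degenerate cubes. As you note, that aside is not needed for the conclusion.
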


\begin{proof}
  By Proposition~\ref{prop:H2-F2} and Lemma~\ref{lem:rank3-computer}, the connecting homomorphism
  \begin{equation*}
    \delta_3:H_3(F_3,F_2;\Fb_2)\longrightarrow H_2(F_2;\Fb_2)
  \end{equation*}
  is surjective.
  Since $H_2(F_3,F_2;\Fb_2)=0$, the long exact sequence of the pair $(F_3,F_2)$ gives $H_2(F_3;\Fb_2)=0$.
  Similarly, Lemma~\ref{lem:rank4-computer} gives
  $H_2(F_4,F_3;\Fb_2)=0$.
  The long exact sequence of the pair $(F_4,F_3)$ then implies $H_2(F_4;\Fb_2)=0$.
  Finally, by Lemmas~\ref{lem:support-rank} and~\ref{lem:rank4-computer}, every non-degenerate $2$-cube has support rank at most four, so
  \begin{equation*}
    (F_4)_2=C_2^D(G_{5,4};\Fb_2).
  \end{equation*}
  Hence every $2$-cycle in the full chain complex is a $2$-cycle in $F_4$.
  Since $H_2(F_4;\Fb_2)=0$, it is already a boundary in $F_4$, and therefore also in the full chain complex.
  Thus $DH_2(G_{5,4};\Fb_2)=0$.
\end{proof}

\begin{cor}\label{cor:A2-vanishing}
  We have
  \begin{equation*}
    A_2(G_{5,4})\otimes\Fb_2=0.
  \end{equation*}
\end{cor}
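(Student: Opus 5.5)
The plan is to pass from the vanishing of $DH_2(G_{5,4};\Fb_2)$ in Proposition~\ref{prop:discrete-homology} to the vanishing of $A_2(G_{5,4})\otimes\Fb_2$, in three steps.

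First, identify $A_2(G_{5,4})$ with integral discrete homology. By Lemma~\ref{lem:A1-vanishing}, $G_{5,4}$ is connected and $A_1(G_{5,4})=0$. Theorem~\ref{thm:discrete-Hurewicz} with $n=2$ therefore gives $A_2(G_{5,4},e)\cong\widetilde{DH}_2(G_{5,4},e)$. Since the graph is connected, the reduced and unreduced groups agree in degree two, so $A_2(G_{5,4})\cong DH_2(G_{5,4})$.

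Second, relate $DH_2(G_{5,4})\otimes\Fb_2$ to $DH_2(G_{5,4};\Fb_2)$ by the universal coefficient theorem. The normalized complex $C_*^D(G_{5,4})$ is a complex of free abelian groups: it is $L^D_*/D^D_*$, where $D^D_n$ is spanned by a subset of the basis of cubes, namely the degenerate ones. Hence there is a natural short exact sequence
\begin{equation*}
  0\longrightarrow DH_2(G_{5,4})\otimes\Fb_2\longrightarrow DH_2(G_{5,4};\Fb_2)\longrightarrow \operatorname{Tor}\bigl(DH_1(G_{5,4}),\Fb_2\bigr)\longrightarrow 0.
\end{equation*}
In particular, $DH_2(G_{5,4})\otimes\Fb_2$ injects into $DH_2(G_{5,4};\Fb_2)$.

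Third, conclude. Proposition~\ref{prop:discrete-homology} gives $DH_2(G_{5,4};\Fb_2)=0$, so $DH_2(G_{5,4})\otimes\Fb_2=0$, and hence $A_2(G_{5,4})\otimes\Fb_2=0$ by the first step. Note that this argument needs no finite generation and no information about $DH_1$.

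The only point needing care is the freeness of $C_*^D(G_{5,4})$, which the universal coefficient theorem requires. I would settle it with the remark above: the degenerate cubes form a subset of the standard basis of $L^D_n$, so the quotient is free on the non-degenerate cubes. Everything else is a direct citation of earlier results.
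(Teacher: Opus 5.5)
Your proposal is correct and matches the paper's proof: discrete Hurewicz (via Lemma~\ref{lem:A1-vanishing}) identifies $A_2(G_{5,4})$ with $DH_2(G_{5,4})$, and the universal coefficient injection $DH_2(G_{5,4})\otimes\Fb_2\hookrightarrow DH_2(G_{5,4};\Fb_2)=0$ finishes the argument. Your check that the normalized complex is free on the non-degenerate cubes is a detail the paper leaves implicit.
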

\begin{proof}
  By Lemma~\ref{lem:A1-vanishing} and Theorem~\ref{thm:discrete-Hurewicz}, we have $A_2(G_{5,4})\cong DH_2(G_{5,4})$.
  The universal coefficient theorem for homology gives an injection
  \begin{equation*}
    DH_2(G_{5,4})\otimes\Fb_2\longrightarrow DH_2(G_{5,4};\Fb_2).
  \end{equation*}
  Since the target vanishes by Proposition~\ref{prop:discrete-homology}, we obtain $A_2(G_{5,4})\otimes\Fb_2=0$.
\end{proof}

\appendix
\section{Computational verification}\label{app:computation}
This appendix records the computer-assisted calculations used in Section~\ref{sec:discrete-side}. All computations are performed over $\Fb_2$ in the normalized cubical chain complex of $N_1G_{5,4}$, using the support-rank filtration introduced in Section~\ref{sec:discrete-side} and the decomposition~\eqref{eq:support-decomposition}.

\subsection{Computational setup}
A $q$-cube $\sigma:I_1^{\Box q}\longrightarrow G_{5,4}$ is represented by its values on the $2^q$ vertices of $I_1^{\Box q}$. For computational purposes, we enumerate based cubes as representatives of left $S_6$-orbits, retaining all their left translates as distinct generators of the chain complex.

For each retained cube, its exact parabolic support is determined and its boundary over $\Fb_2$ is computed by
\begin{equation*}
  \partial\sigma=\sum_{i=1}^q\sum_{\varepsilon=0}^1\sigma\partial_{i,\varepsilon}.
\end{equation*}
Faces that are degenerate, including those arising from connection maps, are zero in the normalized chain complex. In the relative complex $C_*(F_r,F_{r-1})$, faces of support rank less than $r$ are additionally discarded.
Thus the relative boundary computations are carried out separately for each summand in the decomposition~\eqref{eq:support-decomposition}.

\textbf{Computational environment.}
The computations were performed using SageMath~\cite{The26} (version 10.8) with Python 3.13.7 on an Apple Silicon-based Mac running macOS.

\subsection{Computations through the second filtration stage}\label{app:through-f2}
The direct computation of the normalized chain groups of $F_1$ gives $\dim C_1(F_1)=3600$, $\dim C_2(F_1)=10800$, $\rk\bigl(\partial_1:C_1(F_1)\to C_0(F_1)\bigr)=719$, and $\rk\bigl(\partial_2:C_2(F_1)\to C_1(F_1)\bigr)=1800$.
Consequently,
\begin{equation*}
  \dim H_1(F_1;\Fb_2)=3600-719-1800=1081.
\end{equation*}

For each standard support $J$ of rank one or two, we exhaustively enumerate the non-degenerate cubes up to dimension three in the local exact-support complex $C_*^{e,J}$ and directly compute the ranks of its boundary maps.

For every exact support $J$ of rank one, we obtain $\dim C_2^{g,J}=6$, $\rk\partial_2^{g,J}=1$, and $\rk\partial_3^{g,J}=5$.
Thus $\dim H_2(C_*^{g,J};\Fb_2)=0$.
By the decomposition~\eqref{eq:support-decomposition}, $H_2(F_1,F_0;\Fb_2)=0$.
Since $F_0$ is concentrated in degree zero, the long exact sequence of $(F_1,F_0)$ gives
\begin{equation*}
  H_2(F_1;\Fb_2)=0.
\end{equation*}

For supports of rank two, there are four standard supports of Coxeter type $A_2$ and six of type $A_1\times A_1$. The local chain dimensions and boundary ranks are recorded in Table~\ref{tab:app-rank12}.

\begin{table}[htbp]
  \centering
  \begin{tabular}{c|r|r|r|r|r}
    \text{type}
                    & $\#\{J\}$ & $[S_6:W_J]$ & $\dim C_2^{g,J}$ & $\rk\partial_2^{g,J}$ & $\rk\partial_3^{g,J}$ \\ \hline
    $A_2$           & 4         & 120         & 1134             & 18                    & 1115                  \\
    $A_1\times A_1$ & 6         & 180         & 180              & 4                     & 175
  \end{tabular}\caption{Local boundary computations in support rank two.}\label{tab:app-rank12}
\end{table}

For each support of rank two, Table~\ref{tab:app-rank12} gives $\dim H_2(C_*^{g,J};\Fb_2)=\dim C_2^{g,J}-\rk\partial_2^{g,J}-\rk\partial_3^{g,J}=1$.
The decomposition~\eqref{eq:support-decomposition}  therefore yields
\begin{equation*}
  \dim H_2(F_2,F_1;\Fb_2)=4\cdot120+6\cdot180=1560.
\end{equation*}

We now compute $\delta_2:H_2(F_2,F_1;\Fb_2)\longrightarrow H_1(F_1;\Fb_2)$ using explicit relative $2$-cycles.

For each standard support $J$ with $|J|=2$, the local computation retains a relative $2$-cycle representing the non-zero class in $H_2(C_*^{e,J};\Fb_2)$.
We translate this cycle by representatives of the left cosets $gW_J$. For each translated cycle, its full normalized boundary is computed in $C_1(F_1)$ and reduced modulo $\ima\bigl(\partial_2:C_2(F_1)\to C_1(F_1)\bigr)$.
The resulting $1560$ columns are processed by Gaussian elimination over $\Fb_2$.

The rank growth obtained by processing the supports in lexicographic order is recorded in Table~\ref{tab:app-delta2}.
Here and below, we abbreviate $\{s_{i_1},\ldots,s_{i_r}\}$ by $\{i_1,\ldots,i_r\}$.

\begin{table}[htbp]
  \centering
  \begin{tabular}{c|r|r|r}
    $J$       & $[S_6:W_J]$ & \text{new rank} & \text{cumulative rank} \\ \hline
    $\{1,2\}$ & 120         & 120             & 120                    \\
    $\{1,3\}$ & 180         & 180             & 300                    \\
    $\{1,4\}$ & 180         & 180             & 480                    \\
    $\{1,5\}$ & 180         & 180             & 660                    \\
    $\{2,3\}$ & 120         & 90              & 750                    \\
    $\{2,4\}$ & 180         & 120             & 870                    \\
    $\{2,5\}$ & 180         & 120             & 990                    \\
    $\{3,4\}$ & 120         & 36              & 1026                   \\
    $\{3,5\}$ & 180         & 45              & 1071                   \\
    $\{4,5\}$ & 120         & 10              & 1081
  \end{tabular}
  \caption{Rank computation for the connecting homomorphism $\delta_2$.}
  \label{tab:app-delta2}
\end{table}

The individual rank increments depend on the processing order, whereas the final cumulative rank does not.
The computation gives
\begin{equation*}
  \rk\delta_2=1081.
\end{equation*}

\subsection{Computations at support rank three}\label{app:rank-three}
For each standard support $J$ of rank three, we enumerate all non-degenerate $2$-cubes generating $C_2^{e,J}$.
To prove that $\partial_3^{e,J}:C_3^{e,J}\to C_2^{e,J}$ is surjective, it is unnecessary to enumerate all non-degenerate $3$-cubes.

Instead, for each enumerated based $2$-cube, we first construct non-degenerate $3$-cubes whose bottom face is the given square and whose top face is constant, which we call cones.
If their relative boundaries do not span $C_2^{e,J}$, we additionally construct $3$-cubes with non-constant top faces in $W_J$, which we call prisms.
In both cases, the top vertices are required to satisfy the graph-map adjacency conditions.
The relative boundaries of their left $W_J$-translates are reduced by Gaussian elimination over $\Fb_2$ until they span $C_2^{e,J}$.
Each retained independent boundary is recomputed directly from its source $3$-cube.

The resulting dimensions and boundary ranks are recorded in Table~\ref{tab:app-rank3}.

\begin{table}[htbp]
  \centering
  \begin{tabular}{c|r|r}
    \text{Coxeter type of }$W_J$ & $\dim C_2^{g,J}$ & $\rk\partial_3^{g,J}$ \\ \hline
    $A_3$                        & 7728             & 7728                  \\
    $A_2\times A_1$              & 2856             & 2856                  \\
    $A_1\times A_1\times A_1$    & 1056             & 1056
  \end{tabular}
  \caption{Surjectivity computations in support rank three.}
  \label{tab:app-rank3}
\end{table}

In every case, $\rk\partial_3^{e,J}=\dim C_2^{e,J}$.
By left translation and the decomposition~\eqref{eq:support-decomposition}, this proves
\begin{equation*}
  H_2(F_3,F_2;\Fb_2)=0.
\end{equation*}

We next compute $\delta_3:H_3(F_3,F_2;\Fb_2)\longrightarrow H_2(F_2;\Fb_2)$.
For a non-degenerate $3$-cube $\sigma$ of exact support rank three, decompose its full boundary as $\partial\sigma=a_\sigma+b_\sigma$, where $a_\sigma$ is its exact rank-three component and $b_\sigma\in C_2(F_2)$.
A linear relation $\sum_\sigma c_\sigma a_\sigma=0$ determines a relative $3$-cycle whose full boundary is $\sum_\sigma c_\sigma b_\sigma\in C_2(F_2)$.
Since $\partial^2=0$, this boundary is a $2$-cycle representing the corresponding connecting image.

To organize the rank-two boundary components, we use the auxiliary quotient
\begin{equation*}
  Q=\bigoplus_{\substack{J\subseteq S\\ |J|=2}} \ \bigoplus_{gW_J\in S_6/W_J}C_2^{g,J}/\ima\partial_3^{g,J}.
\end{equation*}
The implementation records in $Q$ the rank-two exact-support component of each lower-support boundary, discarding components of support rank at most one. By Table~\ref{tab:app-rank12},
\begin{equation*}
  \dim Q=4\cdot120\cdot(1134-1115)+6\cdot180\cdot(180-175)=14520.
\end{equation*}

The space $Q$ is an auxiliary quotient, not a homology group.
For each support $J$ of rank two, the inclusion $\ker\partial_2^{g,J}\subseteq C_2^{g,J}$ induces an injection $H_2(C_*^{g,J};\Fb_2)\lhook\joinrel\longrightarrow C_2^{g,J}/\ima\partial_3^{g,J}$.
Taking direct sums gives
\begin{equation*}
  H_2(F_2,F_1;\Fb_2)\lhook\joinrel\longrightarrow Q.
\end{equation*}
Each local second homology group is one-dimensional, and its non-zero class is retained as an explicit generator.
These generators are used to recognize and decode relative homology classes in $Q$.

The connecting images are computed as follows.

\begin{enumerate}
  \item \textbf{Relative cycles.}
        For each support $J$ of rank three, process the relative boundaries of non-degenerate $3$-cubes by Gaussian elimination, retaining linear dependencies together with their source $3$-chains.

  \item \textbf{Elimination after full rank.}
        Once the relative boundaries span $C_2^{e,J}$, construct $3$-chains $u_k$ satisfying $\partial_3^{e,J}u_k=e_k$ for each basis element $e_k$ of $C_2^{e,J}$.
        For a subsequent $3$-cube $\sigma$ with $a_\sigma=\sum_k a_ke_k$, the chain $z_\sigma=\sigma+\sum_k a_ku_k$ is a relative $3$-cycle.
        Its rank-two boundary component in $Q$ is computed using the stored boundary components of the $u_k$, without further Gaussian elimination.

  \item \textbf{Connecting images.}
        Decode the rank-two component of each resulting boundary in $Q$ as a class in $H_2(F_2,F_1;\Fb_2)$ and verify that it belongs to $\ker\delta_2\cong H_2(F_2;\Fb_2)$.

  \item \textbf{Independent images.}
        Compute left translates by representatives of $S_6/W_J$ and retain linearly independent connecting images.
        The corresponding source $3$-cycles and their full boundaries are verified by direct recomputation.
\end{enumerate}

The computation retains $10$ original relative $3$-cycles whose left translates yield $479$ linearly independent connecting images.
The support-by-support rank growth is recorded in Table~\ref{tab:app-delta3}.

\begin{table}[htbp]
  \centering
  \begin{tabular}{c|r|r|r}
    $J$         & $\dim C_2^{g,J}$ & \text{new rank} & \text{cumulative rank} \\ \hline
    $\{1,2,3\}$ & 7728             & 30              & 30                     \\
    $\{1,2,4\}$ & 2856             & 60              & 90                     \\
    $\{1,2,5\}$ & 2856             & 60              & 150                    \\
    $\{1,3,4\}$ & 2856             & 60              & 210                    \\
    $\{1,3,5\}$ & 1056             & 90              & 300                    \\
    $\{1,4,5\}$ & 2856             & 60              & 360                    \\
    $\{2,3,4\}$ & 7728             & 24              & 384                    \\
    $\{2,3,5\}$ & 2856             & 45              & 429                    \\
    $\{2,4,5\}$ & 2856             & 40              & 469                    \\
    $\{3,4,5\}$ & 7728             & 10              & 479
  \end{tabular}
  \caption{Rank computation for the connecting homomorphism $\delta_3$.}
  \label{tab:app-delta3}
\end{table}

As in Table~\ref{tab:app-delta2}, the individual rank increments depend on the processing order.
The final cumulative rank is $479$, giving
\begin{equation*}
  \rk\delta_3=479.
\end{equation*}

\subsection{Rank-four computation and verification}\label{app:rank-four}
For each standard support $J$ of rank four, we apply the same cone-and-prism construction and Gaussian elimination procedure used in support rank three.
The resulting boundary ranks are recorded in Table~\ref{tab:app-rank4}.

\begin{table}[htbp]
  \centering
  \begin{tabular}{c|r|r}
    \text{Coxeter type of }$W_J$ & $\dim C_2^{g,J}$ & $\rk\partial_3^{g,J}$ \\ \hline
    $A_4$                        & 12000            & 12000                 \\
    $A_3\times A_1$              & 4032             & 4032                  \\
    $A_2\times A_2$              & 4248             & 4248
  \end{tabular}
  \caption{Surjectivity computations in support rank four.}
  \label{tab:app-rank4}
\end{table}

For every support $J$ of rank four, $\rk\partial_3^{e,J}=\dim C_2^{e,J}$.
By left translation and the decomposition~\eqref{eq:support-decomposition}, this proves
\begin{equation*}
  H_2(F_4,F_3;\Fb_2)=0.
\end{equation*}

The complete enumeration of based non-degenerate $2$-cubes gives the support-rank distribution recorded in Table~\ref{tab:app-support-distribution}.

\begin{table}[htbp]
  \centering
  \begin{tabular}{c|rrrr|r}
    \text{support rank}
     & 1  & 2    & 3    & 4   & \text{total} \\ \hline
    \text{number of based non-degenerate $2$-cubes}
     & 15 & 1026 & 2526 & 486 & 4053
  \end{tabular}
  \caption{Support-rank distribution of based non-degenerate $2$-cubes.}
  \label{tab:app-support-distribution}
\end{table}

\noindent\textbf{Consistency checks.}
For support ranks three and four, each retained independent relative boundary is recomputed directly from its source $3$-cube, and the resulting boundaries are verified to span $C_2^{e,J}$.
For $\delta_3$, the retained source $3$-cycles and their left translates are reconstructed, and their full normalized boundaries are checked against the computed images in $Q$.
Finally, the $479$ connecting images are verified to be linearly independent and to represent classes in $\ker\delta_2$.

\medskip
\noindent\textbf{Source code and reproducibility.}
The SageMath source code, together with the complete execution log, is publicly available at \url{https://github.com/so-ipa/BSW_conj_computations}.

\section*{Acknowledgments}
The author would like to thank Luigi Caputi for helpful discussions on this project, as well as for carefully reading an earlier version of the manuscript and providing valuable comments and suggestions.
The author was supported by JSPS KAKENHI Grant Number JP24K16926 and JSPS Bilateral Program Grant Number JPJSBP120256504.

\section*{Use of AI}
Generative AI tools were used for language editing and to assist in the development of the computational code.
The author reviewed the mathematical arguments and verified the computational results using the procedures described in Appendix~\ref{app:computation} and takes full responsibility for the contents of the manuscript.

\bibliographystyle{amsalpha}
\bibliography{kpara}

\end{document}